 \newtheorem{thm}{Theorem}[section]
 \newtheorem{cor}[thm]{Corollary}
 \newtheorem{lem}[thm]{Lemma}
 \newtheorem{prop}[thm]{Proposition}
    \theoremstyle{definition}
 \newtheorem{defn}[thm]{Definition}
  \newtheorem{notn}[thm]{Notation}
  \newtheorem{example}[thm]{Example}
 \theoremstyle{remark}
 \newtheorem{rem}[thm]{Remark}
\numberwithin{equation}{section}
\DeclareMathOperator{\Pspec}{P.Spec}
\DeclareMathOperator{\spec}{Spec}
\DeclareMathOperator{\prim}{Prim}
\DeclareMathOperator{\Pprim}{P.Prim}
\begin{document}

\title[semiclassical limit]{A natural map from a quantized space onto its semiclassical limit and a multi-parameter Poisson Weyl algebra}

\author{Sei-Qwon Oh}

\address{Department of Mathematics, Chungnam National  University, 99 Daehak-ro,   Yuseong-gu, Daejeon 34134, Korea}
 \email{sqoh@cnu.ac.kr}

\thanks{The author is supported by National  Research Foundation of Korea Grant 2012-007347.}

\subjclass[2010]{17B63, 16S36}

\keywords{Poisson algebra, quantized algebra,  semiclassical limit, quantized Weyl algebra}

\date{August 31, 2015.}


\begin{abstract}
A natural map from a quantized space onto its semiclassical limit is obtained. As an application, we see that an induced map by the natural map is a homeomorphism  from the spectrum of the multi-parameter quantized Weyl algebra onto the Poisson spectrum of its semiclassical limit.
\end{abstract}

\maketitle


\section*{Introduction}

A quantized coordinate ring $\mathcal{O}_q(V)$ of an affine variety $V$ is  informally a deformation of its classical coordinate ring $\mathcal{O}(V)$. Let us replace the parameter $q\in \mathcal{O}_q(V)$  by an indeterminate $t$. If a central element $h$ of $\mathcal{O}_t(V)$ is  non-zero, non-unit and non-zero-divisor and the factor algebra  $\mathcal{O}_t(V)/h\mathcal{O}_t(V)$ is commutative then $\mathcal{O}_t(V)/h\mathcal{O}_t(V)$ is a Poisson algebra with Poisson bracket defined by $\{\overline{a},\overline{b}\}=\overline{h^{-1}(ab-ba)}$. In such case, the Poisson algebra $\mathcal{O}_t(V)/h\mathcal{O}_t(V)$ is called a semiclassical limit of $\mathcal{O}_t(V)$ and $\mathcal{O}_t(V)$ is called a quantization of the Poisson algebra $\mathcal{O}_t(V)/h\mathcal{O}_t(V)$. Here we study a class $\mathcal{O}_q(V)$ of quantized algebras containing many cases such as  \cite{Good4}, \cite{GoLet1},  \cite{JoOh}, \cite{Oh7}, \cite{OhPaSh1}, \cite{OhPa}. Our aim of this paper is to construct a natural map from $\mathcal{O}_q(V)$ onto its semiclassical limit.  As an application, we prove that this natural map induces a homeomorphism between the spectrum of quantized Weyl algebra and the Poisson spectrum of its semiclassical limit.

The multiparameter quantized Weyl algebra $R_n^{Q,\Lambda}$ was constructed by G.~Maltsiniotis \cite{Mal}. The spectrum  $\spec R_n^{Q,\Lambda}$ was described by M.~Akhavizadegan and David~A. Jordan \cite{AkJo} and K. Goodearl and E. Letzter
\cite{GoLet1} proved  that $R_n^{Q,\Lambda}$ satisfies the Dixmier-Moeglin equivalence.
 Here we find a semiclassical limit $A_1$ of $R_n^{Q,\Lambda}$ and prove that $A_1$ satisfies the Poisson Dixmier-Moeglin equivalence. Moreover we construct a natural homeomorphism  from $\spec R_n^{Q,\Lambda}$ onto $\Pspec A_1$ such that its restriction to the primitive spectrum $\prim R_n^{Q,\Lambda}$  is also a homeomorphism onto the Poisson primitive spectrum $\Pprim A_1$ by applying the main result of this paper. This gives an affirmative answer for $R_n^{Q,\Lambda}$ for the conjecture
posted in \cite[9.1]{Good4}: {\it Assume that $k$ is an algebraically closed field of characteristic zero, and let $A$ be a generic quantized coordinate ring of an affine algebraic variety $V$ over $k$. Then $A$ should be a member of a flat family of $k$-algebras with semiclassical limit $\mathcal{O}(V)$, such that the primitive spectrum of $A$ is homeomorphic to the space of symplectic cores in $V$, with respect to the semiclassical limit Poisson structure. Further, there should be compatible homeomorphisms from the primitive spectrum of $A$ onto the Poisson primitive spectrum of $\mathcal{O}(V)$ and from  the  spectrum of $A$ onto the Poisson spectrum of $\mathcal{O}(V)$.}

In the first section, we see a method how to construct a natural map from a quantized space into its semi-classical limit which can  induce a homeomorphism and the ideas are illustrated by the quantized coordinate ring $\mathcal{O}_q(\Bbb C^2)$ of affine 2-space. In the second section, we construct a semiclassical limit $A_1$ of $R_n^{Q,\Lambda}$ called a multiparameter Poisson Weyl algebra. Then we find its Poisson spectrum and Poisson primitive spectrum by modifying the proofs given by  M.~Akhavizadegan and David~A. Jordan \cite{AkJo} and prove that $A_1$ satisfies the Poisson  Dixmier-Moeglin equivalence.

Assume throughout the paper that the base field is the complex number field $\Bbb C$ and that all algebras considered have  unity.


\begin{defn}
(1) A commutative algebra $A$ over  $\Bbb C$ is said to be  a {\it Poisson algebra} if there exists a bilinear product $\{-,-\}$ on $A$, called a {\it Poisson bracket}, such that $(A, \{-,-\})$ is a Lie algebra and $\{ab,c\}=a\{b,c\}+\{a,c\}b$ for all $a,b,c\in A$.

(2) An ideal $I$ of a Poisson algebra $A$ is said to be a {\it Poisson ideal} if $\{I,A\}\subseteq I$.  A Poisson ideal $P$ is said to be {\it Poisson prime} if, for all Poisson ideals $I$ and $J$, $IJ\subseteq P$ implies $I\subseteq P$ or $J\subseteq P$.  If $A$ is noetherian  then a Poisson prime ideal of $A$ is a prime ideal by \cite[Lemma 1.1(d)]{Good3}.

 For an ideal $I$ of a Poisson algebra $A$, the largest Poisson ideal $\mathcal{P}(I)$ contained in $I$ is called the {\it Poisson core} of $I$. If $I$ is prime then $\mathcal{P}(I)$ is prime by \cite[3.3.2]{Di}. A Poisson ideal $P$ of $A$ is said to be {\it Poisson primitive} if $P=\mathcal{P}(M)$ for some maximal ideal $M$ of $A$.
Note that Poisson primitive is Poisson prime.
\end{defn}

\begin{defn}
(1) Let $R$ be an algebra. The spectrum of $R$, denoted by $\spec R$, is the set of all prime ideals of $R$ equipped with the Zariski topology.
The primitive spectrum, denoted by $\prim R$, is the subspace of $\spec R$ consisting of all primitive ideals of $R$.

(2) Let $A$ be a Poisson algebra. The Poisson spectrum of $A$, denoted by $\Pspec A$, is the set of all Poisson prime ideals of $A$ equipped with the Zariski topology. The Poisson primitive spectrum of $A$, denoted by $\Pprim A$, is the subspace of $\Pspec A$ consisting of all Poisson primitive ideals of $A$.
If $A$ is noetherian then $\Pspec A$ is a subspace of $\spec A$ since Poisson prime is prime.
\end{defn}


\section{Natural map}

\begin{notn}\label{ASSUM}
Let  $t$ be an indeterminate.

(1) Assume that  ${\bf K}$ is an infinite subset of the set $\Bbb C\setminus\{0,1\}$.

(2) Assume that
$\Bbb F$ is a subring of the ring of regular functions on ${\bf K}\cup\{1\}$ containing $\Bbb C[t,t^{-1}]$. That is,
\begin{equation}\label{REG}
\Bbb C[t,t^{-1}]\subseteq \Bbb F\subseteq\{f/g\in\Bbb C(t)| f,g\in\Bbb C[t]\text{ such that } g(1)\neq0,g(\lambda)\neq0\ \forall\lambda\in {\bf K}\}.
\end{equation}

(3)
Let $\Bbb F\langle x_1,\ldots,x_n\rangle$ be the free $\Bbb F$-algebra on the set $\{x_1,\ldots, x_n\}$. A finite product ${\bf x}$  of $x_i$'s (repetitions allowed) is called a monomial. For each $i=1,\ldots,r$, let $f_i$ be an $\Bbb F$-linear combination of monomials
$$f_i=\sum_{\bf x}a^i_{\bf x}(t){\bf x},\ \ a^i_{\bf x}(t)\in\Bbb F.$$
Set $$A=\Bbb F\langle x_1,\ldots,x_n\rangle/I,$$ where $I$ is the ideal of $\Bbb F\langle x_1,\ldots,x_n\rangle$ generated by $f_1,\ldots,f_r$. That is, $A$ is the $\Bbb F$-algebra generated by $x_1,\ldots,x_n$ subject to the relations
$$f_1,\ldots,f_r.$$

Note that $A$ is also  a $\Bbb C$-algebra since $\Bbb C\subseteq \Bbb F$.

(4) Assume that  $t-1$ is a nonzero, nonunit and non-zero-divisor of $A$ such that the factor $A_1=A/(t-1)A$ is commutative. Denote by $\gamma_1$ the canonical homomorphism of $\Bbb C$-algebras
    $$\gamma_1:A\longrightarrow A_1.$$

Note, by \cite[III.5.4]{BrGo}, that the commutative $\Bbb C$-algebra $A_1$ is a Poisson algebra with Poisson bracket
$$\{\gamma_1(a),\gamma_1(b)\}=\gamma_1((t-1)^{-1}(ab-ba))$$
for all $a,b\in A$. The Poisson algebra $A_1$ is said to be a {\it semiclassical limit} of $A$.

(5)
For each $\lambda\in{\bf K}$, let $A_\lambda$ be the $\Bbb C$-algebra generated by $x_1,\ldots,x_n$ subject to the relations $$f_1|_{t=\lambda},\ldots,f_r|_{t=\lambda},$$ where
$f_i|_{t=\lambda}=\sum_{\bf x}a^i_{\bf x}(\lambda){\bf x}$.
Note that $a^i_{\bf x}(\lambda)$ is a well-defined element of $\Bbb C$ by (\ref{REG}) and that there exists a canonical homomorphism of $\Bbb C$-algebras
$$\gamma_\lambda:A\longrightarrow A_\lambda$$
such that $\gamma_\lambda(t)=\lambda, \gamma_\lambda(x_i)=x_i$ for $i=1,\ldots,n$.

(6) Set
$$\widehat{A}=\prod_{\lambda\in{\bf K}}A_\lambda.$$
Let $$\pi_\lambda:\widehat{A}\longrightarrow A_\lambda$$ be the canonical projection onto $A_\lambda$ for each $\lambda\in{\bf K}$ and let $\gamma$ be the homomorphism of $\Bbb C$-algebras
\begin{equation}\label{gamm}
\gamma:A\longrightarrow \widehat{A},\ \ \gamma(a)=(\gamma_\lambda(a))_{\lambda\in{\bf K}}.
\end{equation}
 Thus $\pi_\lambda\gamma=\gamma_\lambda$ for each $\lambda\in{\bf K}$.

Note that $\gamma(t-1)$ is an invertible element of $\widehat{A}$ since $1\notin{\bf K}$.

(7)
Assume that there exists an $\Bbb F$-basis $\{\xi_i|i\in I\}$ of $A$ such that $$\{\gamma_1(\xi_i)|i\in I\},\ \ \ \{\gamma_\lambda(\xi_i)|i\in I\}$$ are $\Bbb C$-bases of $A_1$ and $A_\lambda$, respectively, for each $\lambda\in {\bf K}$.

Hence every element $a\in A$ is expressed uniquely by
$$a=\sum_ia_i(t)\xi_i,\ \ a_i(t)\in\Bbb F$$
and, for each $\lambda\in {\bf K}$,
$$\begin{array}{c}\gamma_\lambda(a)=\sum_ia_i(\lambda)\gamma_\lambda(\xi_i),\ \ \
\gamma_1(a)=\sum_ia_i(1)\gamma_1(\xi_i).
\end{array}$$
Note that $a_i(\lambda)$ and $a_i(1)$ are well-defined elements of $\Bbb C$ by (\ref{REG}).
\end{notn}

\begin{lem}\label{MONO}
 The map  $\gamma$ in (\ref{gamm}) is a monomorphism of $\Bbb C$-algebras.
\end{lem}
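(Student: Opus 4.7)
The plan is to show directly that $\ker\gamma=0$ by unpacking any hypothetically nonzero element in the $\Bbb F$-basis $\{\xi_i\mid i\in I\}$ of $A$ supplied in Notation \ref{ASSUM}(7), and then exploiting the infinitude of ${\bf K}$ against the rational-function nature of $\Bbb F$.

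First I would take an arbitrary $a\in\ker\gamma$ and write it uniquely as a finite sum $a=\sum_i a_i(t)\xi_i$ with $a_i(t)\in\Bbb F$. Since $\gamma(a)=0$ means $\gamma_\lambda(a)=\pi_\lambda\gamma(a)=0$ for every $\lambda\in{\bf K}$, and since by assumption $\{\gamma_\lambda(\xi_i)\mid i\in I\}$ is a $\Bbb C$-basis of $A_\lambda$, the expansion $\gamma_\lambda(a)=\sum_i a_i(\lambda)\gamma_\lambda(\xi_i)$ forces $a_i(\lambda)=0$ for every $i\in I$ and every $\lambda\in{\bf K}$.

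Next I would use the inclusion (\ref{REG}) to write each $a_i(t)=f_i(t)/g_i(t)$ with $f_i,g_i\in\Bbb C[t]$ and $g_i(\mu)\neq0$ for all $\mu\in{\bf K}\cup\{1\}$. The previous step then yields $f_i(\lambda)=0$ for all $\lambda\in{\bf K}$. Because ${\bf K}$ is infinite and $f_i$ is a polynomial, this forces $f_i=0$, hence $a_i(t)=0$ for every $i$, hence $a=0$. Therefore $\gamma$ is injective.

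There is no real obstacle here: the argument is essentially a linear-algebra specialization argument, and the only subtle point is to make sure the denominators $g_i(t)$ in the rational-function representatives do not spuriously vanish on ${\bf K}$ — which is exactly what the range of the inclusion (\ref{REG}) is designed to guarantee. The infinitude of ${\bf K}$ fixed in Notation \ref{ASSUM}(1) then closes the argument. That $\gamma$ is a $\Bbb C$-algebra homomorphism is immediate from the fact that each component $\gamma_\lambda$ is one and that $\widehat{A}$ is a product.
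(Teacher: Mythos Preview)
Your proof is correct and follows essentially the same approach as the paper's: expand $a$ in the $\Bbb F$-basis $\{\xi_i\}$, use that $\{\gamma_\lambda(\xi_i)\}$ is a $\Bbb C$-basis of $A_\lambda$ to conclude each coefficient $a_i(\lambda)$ vanishes on all of ${\bf K}$, and then invoke the infinitude of ${\bf K}$ against the rational-function form of $a_i(t)$ to force $a_i(t)=0$. The only difference is cosmetic---you spell out the numerator/denominator decomposition of $a_i(t)$ where the paper simply says ``$a_i(t)\in\Bbb F$ has finite zeros.''
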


\begin{proof}
For $a=\sum_ia_i(t)\xi_i\in A$ ($a_i(t)\in {\Bbb F }$), let $\gamma(a)=0$. Then
$$\sum_ia_i(\lambda)\gamma_\lambda(\xi_i)=\gamma_\lambda(a)=\pi_\lambda\gamma(a)=0$$ for all $\lambda\in{\bf K}$.
 Hence, for each $i\in I$,  $a_i(\lambda)=0$ for all $\lambda\in {\bf K}$ since $\{\gamma_\lambda(\xi_i)|i\in I\}$  is a $\Bbb C$-basis of $A_\lambda$. Thus $a_i(t)=0$ since ${\bf  K}$ is an infinite set and $a_i(t)\in \Bbb F$ has finite zeros.  It follows that $\gamma$ is a monomorphism.
\end{proof}

\begin{notn}
The map
$$\Gamma=\gamma_1\gamma^{-1}:\gamma(A)\longrightarrow A_1$$
is a homomorphism of $\Bbb C$-algebras by Lemma~\ref{MONO}.  Note that, for $a=\sum_ia_i(t)\xi_i\in A$,
\begin{equation}\label{AAA}
\Gamma(\gamma(a)))=\sum_ia_i(1)\gamma_1(\xi_i).
\end{equation}
In particular, $\Gamma(\gamma(\xi_i))=\gamma_1(\xi_i)$ for all $i\in I$ and thus $\Gamma$ is an epimorphism.
\end{notn}

\begin{thm}\label{homeomorphism}
Let  $I$ be an ideal of $\widehat{A}$. Then $\Gamma(I\cap \gamma(A))$ is a Poisson ideal of $A_1$.
\end{thm}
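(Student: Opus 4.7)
The plan is to check the two parts of the definition of Poisson ideal in turn: first, that $\Gamma(I\cap\gamma(A))$ is an ideal of $A_1$, and second, that it is stable under bracketing with arbitrary elements of $A_1$. The first part is immediate, since $I\cap\gamma(A)$ is an ideal of $\gamma(A)$ (being the intersection of a two-sided ideal of $\widehat{A}$ with the subalgebra $\gamma(A)$), and $\Gamma$ is a surjective homomorphism of $\Bbb C$-algebras, hence sends ideals to ideals.

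For the Poisson stability, I would pick arbitrary $x\in I\cap\gamma(A)$ and $y\in A_1$, write $x=\gamma(a)$ for some $a\in A$ (which exists because $x\in\gamma(A)$) and $y=\gamma_1(b)$ for some $b\in A$ (using surjectivity of $\gamma_1$), and compute
$\{\Gamma(x),y\}=\{\gamma_1(a),\gamma_1(b)\}=\gamma_1(c)$,
where $c\in A$ is the unique element satisfying $(t-1)c=ab-ba$. Such a $c$ exists because $A_1$ is commutative, forcing $ab-ba\in(t-1)A$, and is unique because $t-1$ is a non-zero-divisor in $A$. The goal then is to show $\gamma(c)\in I\cap\gamma(A)$, for then $\{\Gamma(x),y\}=\gamma_1(c)=\Gamma(\gamma(c))\in\Gamma(I\cap\gamma(A))$, as required.

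The key step is to transfer the relation $(t-1)c=ab-ba$ from $A$ into $\widehat{A}$. Applying $\gamma$ yields $\gamma(t-1)\gamma(c)=\gamma(a)\gamma(b)-\gamma(b)\gamma(a)=x\gamma(b)-\gamma(b)x$, whose right-hand side lies in $I$ because $x\in I$ and $I$ is two-sided. Since $\gamma(t-1)$ is invertible in $\widehat{A}$ (item (6) of Notation~\ref{ASSUM}, using $1\notin{\bf K}$), multiplying by the central element $\gamma(t-1)^{-1}$ preserves $I$, giving $\gamma(c)\in I$; and $\gamma(c)\in\gamma(A)$ is automatic. The whole argument is a diagram chase, and the main subtlety is conceptual rather than technical: the formal quotient $(t-1)^{-1}(ab-ba)$ defining the Poisson bracket has meaning only inside $A$, where $t-1$ is merely a non-zero-divisor, and the role of the auxiliary algebra $\widehat{A}$ is precisely to furnish a genuine inverse for $t-1$ that interacts well with the two-sided ideal $I$.
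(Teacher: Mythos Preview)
Your proof is correct and follows essentially the same route as the paper's: both arguments reduce to writing $ab-ba=(t-1)c$ in $A$, pushing this identity into $\widehat{A}$ via $\gamma$, and using the invertibility of $\gamma(t-1)$ there to conclude $\gamma(c)\in I\cap\gamma(A)$. The only differences are notational (your $x=\gamma(a)$ plays the role of the paper's $a$, and vice versa).
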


\begin{proof}
  Since $\Gamma$ is surjective, $\Gamma(I\cap \gamma(A))$ is an ideal of $A_1$.
It is enough to show that
$$\{x',y'\}\in \Gamma(I\cap \gamma(A))$$
for any $x'\in \Gamma(I\cap \gamma(A))$ and $y'\in A_1$.
There exist $a\in I\cap \gamma(A)$ and $b\in \gamma(A)$ such that $\gamma_1\gamma^{-1}(a)=x'$ and $\gamma_1\gamma^{-1}(b)=y'$.
Set  $\gamma^{-1}(a)=x$, $\gamma^{-1}(b)=y$.  Thus $x'=\gamma_1(x)$ and $y'=\gamma_1(y)$.
Since $xy-yx=(t-1)z$ for some $z\in A$ by Notation~\ref{ASSUM}(4) and
$$(t-1)z=xy-yx=\gamma^{-1}(a)\gamma^{-1}(b)-\gamma^{-1}(b)\gamma^{-1}(a)=\gamma^{-1}(ab-ba),$$
we have that
$$\gamma(t-1)\gamma(z)=\gamma((t-1)z)=ab-ba$$
  and thus $$\gamma(z)=[\gamma(t-1)]^{-1}(ab-ba)\in I\cap\gamma(A)$$ since $\gamma(t-1)$ is invertible in $\widehat{A}$.
  Hence
$$\{x',y'\}=\{\gamma_1(x),\gamma_1(y)\}=\gamma_1(z)\in \Gamma(I\cap \gamma(A)),$$
as claimed.
\end{proof}

\begin{cor}\label{INC}
 Let $X$ be a  subspace of the ideals of $\widehat{A}$ equipped with Zariski topology and let $Y$ be a topological subspace of $\Pspec A_1$. Suppose that the map
    $\varphi_\Gamma:X\longrightarrow Y$ defined by
    \begin{equation}\label{GAMM}
    \varphi_\Gamma(P)=\Gamma(P\cap \gamma(A))
      \end{equation}
      is a bijection  satisfying the following condition: For any $P,Q\in X$, there exist subsets $I_P, I_Q\subseteq \gamma(A)$ such that $I_P$, $I_Q$, $\Gamma(I_P)$ and $\Gamma(I_Q)$ generate $P$, $Q$, $\varphi_\Gamma(P)$ and $\varphi_\Gamma(Q)$, respectively, and that $I_P\subseteq I_Q$ if and only if $\Gamma(I_P)\subseteq\Gamma(I_Q)$.  Then $\varphi_\Gamma$ is a  homeomorphism from $X$ onto $Y$.
\end{cor}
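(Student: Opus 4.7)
The plan is to exploit the generating-set hypothesis to show that $\varphi_\Gamma$ is an inclusion-preserving and inclusion-reflecting bijection, and then translate this order-theoretic property into a topological one using the Zariski-topology structure of $X$ and $Y$.

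First I would establish that, for all $P, Q \in X$, one has $P \subseteq Q$ if and only if $\varphi_\Gamma(P) \subseteq \varphi_\Gamma(Q)$. Given $P \subseteq Q$, the hypothesis furnishes subsets $I_P, I_Q \subseteq \gamma(A)$ generating $P$ and $Q$, with $\Gamma(I_P), \Gamma(I_Q)$ generating the images, such that $I_P \subseteq I_Q$ iff $\Gamma(I_P) \subseteq \Gamma(I_Q)$. Since $I_P \subseteq P \subseteq Q$, one may enlarge $I_Q$ to the generating set $I_P \cup I_Q$ of $Q$, ensuring $I_P \subseteq I_Q$. Then by the \emph{iff}, $\Gamma(I_P) \subseteq \Gamma(I_Q)$, whence $\varphi_\Gamma(P) \subseteq \varphi_\Gamma(Q)$. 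The reverse implication is symmetric, using that $\varphi_\Gamma$ is a bijection.

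Next I would translate this into preservation of closed sets. A closed set of $X$ has the form $C = V(J) \cap X$ for an ideal $J$ of $\widehat A$, which we may replace by $J = \bigcap_{P \in C} P$ without changing $C$; similarly define $K = \bigcap_{P \in C} \varphi_\Gamma(P)$, a Poisson ideal of $A_1$ by Theorem~\ref{homeomorphism} (as an intersection of Poisson ideals). The claim is that $\varphi_\Gamma(C) = V(K) \cap Y$. The inclusion $\varphi_\Gamma(C) \subseteq V(K) \cap Y$ is immediate from the definition of $K$. For the reverse, one takes $Q \in V(K) \cap Y$, sets $P^* = \varphi_\Gamma^{-1}(Q)$, and uses the first step to transfer the containment $K \subseteq \varphi_\Gamma(P^*)$ back to $J \subseteq P^*$, placing $P^*$ in $C$. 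The symmetric argument applied to $\varphi_\Gamma^{-1}$ then yields the homeomorphism.

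The main obstacle I anticipate is precisely the reverse containment in the second step: recovering $J \subseteq P^*$ from $K \subseteq \varphi_\Gamma(P^*)$, where both $J$ and $K$ are intersections indexed over the entire closed set $C$. Pairwise order-preservation alone is not quite enough, and one must leverage the hypothesis more carefully---applying it to each pair $(P, P^*)$ for $P \in C$ and assembling the resulting generating-set comparisons in $\gamma(A)$---to ensure that intersections on the two sides correspond under $\Gamma$. Once this is done, continuity and openness of $\varphi_\Gamma$ both follow, completing the proof of homeomorphism.
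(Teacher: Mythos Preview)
Your overall strategy—first show that $\varphi_\Gamma$ is an order-isomorphism, then upgrade this to a homeomorphism—is exactly the paper's. The paper's proof is two lines: it asserts that $\varphi_\Gamma$ and $\varphi_\Gamma^{-1}$ preserve inclusions and then invokes \cite[Lemma~9.4(a)]{Good4}, which says that any inclusion-preserving (both ways) bijection between sets of ideals equipped with their Zariski topologies is a homeomorphism.

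Two comments on where your write-up diverges.

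\emph{Step 1.} The forward implication $P\subseteq Q\Rightarrow\varphi_\Gamma(P)\subseteq\varphi_\Gamma(Q)$ is immediate from the definition $\varphi_\Gamma(P)=\Gamma(P\cap\gamma(A))$ and needs no generating sets at all. Your detour through replacing $I_Q$ by $I_P\cup I_Q$ is not only unnecessary but not justified: the hypothesis supplies \emph{some} pair $(I_P,I_Q)$ with the stated ``iff'', and there is no reason the modified pair $(I_P,\,I_P\cup I_Q)$ still enjoys it. Only the reverse implication genuinely uses the generating-set hypothesis, and ``symmetric'' is too quick there for the same reason.

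\emph{Step 2.} Here the paper simply cites Goodearl's lemma, whereas you attempt a direct closed-set argument and, to your credit, correctly identify the obstacle: from $\bigcap_{P\in C}\varphi_\Gamma(P)\subseteq\varphi_\Gamma(P^\ast)$ one cannot in general recover $\bigcap_{P\in C}P\subseteq P^\ast$ using only pairwise order comparisons, and your proposed fix (``assembling the resulting generating-set comparisons in $\gamma(A)$'') is too vague to close it. This is a genuine gap: an order-isomorphism between sets of ideals in two different rings need not respect infinite intersections, so without further input the argument stalls. The clean way out is the one the paper takes—cite \cite[Lemma~9.4(a)]{Good4}. If you want to argue directly, the point to use is that closed sets in these Zariski topologies are determined by their minimal members (so that $C=\bigcup_i\bigl(V(P_i)\cap X\bigr)$ for the minimal $P_i\in C$), and an order-isomorphism sends each $V(P_i)\cap X$ to $V(\varphi_\Gamma(P_i))\cap Y$; one then only needs finiteness of the minimal elements, which on the $Y$ side follows from $A_1$ being noetherian.
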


\begin{proof}
Note that $\varphi_\Gamma(P)$ is a Poisson ideal for any $P\in X$ by Theorem~\ref{homeomorphism}.
Clearly $\varphi_\Gamma$  is  a bijection such that $\varphi_\Gamma$ and $\varphi_\Gamma^{-1}$ preserve inclusions. Hence $\varphi_\Gamma$ is a homeomorphism from $X$ onto $Y$ by \cite[Lemma 9.4(a)]{Good4}.
\end{proof}

In the following example, we see a method how to construct a homeomorphism from $\spec\mathcal{O}_q(\Bbb C^2)$ onto the Poisson spectrum of its semiclassical limit.

\begin{example}\label{affine}
Set ${\bf K}=\Bbb C\setminus(\{0,1\}\cup\{\text{roots of unity}\})$ and $\Bbb F=\Bbb C[t,t^{-1}]$. Let $A$ be the  $\Bbb F$-algebra generated by $x, y$ subject to the relation
$$xy=tyx.$$
Then, for each $\lambda\in{\bf K}$, $A_\lambda$ is the $\Bbb C$-algebra generated by $x,y$ subject to the relation
$$xy=\lambda yx$$ and thus
the set $\{x^iy^j|i,j=0,1,\ldots\}$ of all standard monomials  forms an $\Bbb F$-basis of  $A$ and a $\Bbb C$-basis of $A_\lambda$.

Observe that $A_1=A/(t-1)A$ is the Poisson $\Bbb C$-algebra $\Bbb C[x,y]$ with Poisson bracket
$$\{x,y\}=xy.$$
Note that the triple $({\bf K},\Bbb F, A)$ satisfies the conditions of Notation~\ref{ASSUM}. We retain the notations in Notation~\ref{ASSUM}.

Let $0\neq q\in\Bbb C$ be not a root of unity and let $\mathcal{O}_{q}(\Bbb C^2)$  be  the coordinate ring of quantized affine $2$-space. That is,  $\mathcal{O}_{q}(\Bbb C^2)$ is the $\Bbb C$-algebra generated by $x, y$ subject to the relation $$xy=qyx.$$ Note that $\{x^iy^j|i,j=0,1,\ldots\}$ is a $\Bbb C$-basis of $\mathcal{O}_q(\Bbb C^2)$.
For any $\lambda\in{\bf  K}$, $\lambda$ is not a root of unity and thus there exists  the $\Bbb C$-algebra $\mathcal{O}_\lambda(\Bbb C^2)$ that is defined by substituting $\lambda$ to $q$ in $\mathcal{O}_{q}(\Bbb C^2)$.
(Hence $q$ is a nonzero complex number that is not a root of unity as well as plays a role as a parameter taking values in ${\bf K}$.)
Note that $A_\lambda=\mathcal{O}_\lambda(\Bbb C^2)$ for each $\lambda\in{\bf K}$.

It is well-known, see \cite[2.1(ii)]{GoLet3}, that
\begin{equation}\label{Paffine}
\spec \mathcal{O}_{q}(\Bbb C^2)=\{ \langle 0\rangle,   \langle x\rangle, \langle y\rangle, \langle x-\mu,y\rangle, \langle x, y-\nu\rangle |\ \mu,\nu\in\Bbb C\}.
\end{equation}
Hence, for each $\lambda\in{\bf K}$,
$$\spec \mathcal{O}_{\lambda}(\Bbb C^2)=\{ \langle 0\rangle,   \langle x\rangle, \langle y\rangle, \langle x-\mu,y\rangle, \langle x, y-\nu\rangle |\  \mu,\nu\in\Bbb C\}.$$

Define a map
\begin{equation}\label{AFF}
\widehat{}\ : \mathcal{O}_{q}(\Bbb C^2)\longrightarrow \widehat{A}=\prod_{\lambda\in{\bf K}}A_\lambda,\ \ \widehat{f(q)}=(f(\lambda))_{\lambda\in{\bf K}}.
\end{equation}
(Here $q$ is considered as a parameter taking values in ${\bf K}$.)
Hence, for each $\lambda\in{\bf K}$,
$$\pi_\lambda(\widehat{0})=0, \ \pi_\lambda(\widehat{x})=x, \ \pi_\lambda(\widehat{y})=y,\ \pi_\lambda(\widehat{x-\mu})=x-\mu,\ \pi_\lambda(\widehat{y-\nu})=y-\nu.$$
Note that
$$\gamma(0)=\widehat{0}, \ \ \ \gamma(x)=\widehat{x},\ \ \  \gamma(y)=\widehat{y},\ \ \ \gamma(x-\mu)=\widehat{x-\mu},\ \ \ \gamma(y-\nu)=\widehat{y-\nu}.$$

Let $X$ be the set consisting of the following ideals of $\widehat{A}$
$$X=\{ \langle\widehat{0}\rangle,   \langle \widehat{x}\rangle, \langle \widehat{y}\rangle, \langle \widehat{x-\mu},\widehat{y}\rangle, \langle \widehat{x}, \widehat{y-\nu}\rangle |\  \mu,\nu\in\Bbb C\}.$$
The map (\ref{AFF}) is an injective map preserving inclusions since $q\in{\bf K}$ and thus  (\ref{AFF}) induces a canonical homeomorphism $\widehat{\varphi}$ from $\spec\mathcal{O}_q(\Bbb C^2)$ onto $X$. Note that all ideals in $X$ are generated by elements of $\gamma(A)$,
 that
$$\varphi_\Gamma(X)=\{ \langle 0\rangle,   \langle x\rangle, \langle y\rangle, \langle x-\mu,y\rangle, \langle x, y-\nu\rangle |\  \mu,\nu\in\Bbb C\}$$
is equal to $\Pspec A_1$ by \cite[9.5]{Good4} and that $\varphi_\Gamma$ is a homeomorphism from $X$ onto $\varphi_\Gamma(X)$
by Corollary~\ref{INC}. Hence the composition $\varphi_\Gamma\circ \widehat{\varphi}$ \ is a homeomorphism from $\spec \mathcal{O}_q(\Bbb C^2)$ onto $\Pspec A_1$.
\end{example}

\section{Multi-parameter Poisson Weyl algebras}

\begin{defn}\label{Weyl} (See \cite{Mal}.)
Let $Q=(q_1,\ldots, q_n)$ be an $n$-tuple of elements of $\Bbb C\setminus\{0,1\}$ and let $\Lambda=(\lambda_{ij})$ be a multiplicative antisymmetric $n\times n$-matrix over $\Bbb C\setminus\{0\}$, namely $\lambda_{ij}=\lambda_{ji}^{-1}$ for $i\neq j$ and $\lambda_{ii}=1$. The multi-parameter quantized Weyl algebra $R_n^{Q,\Lambda}$ is  the $\Bbb C$-algebra generated by $y_1,x_1,\ldots, y_n,x_n$ subject to the relations
\begin{equation}\label{WA}
\begin{aligned}
y_jy_i&={\lambda}_{ji}y_iy_j&&  1\leq i<j\leq n\\
y_jx_i&={\lambda}_{ij}x_iy_j&&1\leq i<j\leq n\\
x_jy_i&={q}_i{\lambda}_{ij}y_ix_j&&1\leq i<j\leq n\\
x_jx_i&={q}_{i}^{-1}{\lambda}_{ij}^{-1}x_ix_j&&1\leq i<j\leq n\\
x_iy_i-{q}_iy_ix_i&=1+\sum_{k=1}^{i-1}(q_k-1)y_kx_k&\qquad& 1\leq i\leq n.
\end{aligned}
\end{equation}
\end{defn}

\begin{notn}
(1) Denote by $G(Q,\Lambda)$ the multiplicative subgroup of $\Bbb C\setminus\{0\}$ generated by  all $q_i$ and $\lambda_{ij}$, $1\leq i,j\leq n$.

(2) {\it Assume throughout the section that $G(Q,\Lambda)$ is torsion free.} Hence $G(Q,\Lambda)$ is a free abelian group with finite rank, say $r$. Fix a basis $\{\eta_1,\ldots, \eta_r\}$ of  $G(Q,\Lambda)$,  a $\Bbb Q$-linearly independent subset $\{\mu_1,\ldots, \mu_r\}$ of $\Bbb C$ and a non-root of unity $q\in\Bbb C\setminus\{0,1\}$.

(3)  For each $i=1,\ldots, r$, there exists a unique element $e_i=a_it^2+b_it+c_i\in\Bbb C[t]$ such that
\begin{equation}\label{WC}
e_i(q)=\eta_i,\  e_i(1)=1,\  e_i'(1)=\mu_i,
\end{equation}
 where $e_i'$ is the formal derivative of $e_i$, since the determinant of the matrix $\begin{pmatrix} q^2&q&1\\1&1&1\\2&1&0\end{pmatrix}$ is nonzero.

(3) Let $\Bbb F$ be the ring
$$\Bbb F=\Bbb C[t,t^{-1}][e_1^{-1},\ldots,e_r^{-1}].$$
and let $U(\Bbb F)$ be the unit group of $\Bbb F$.
For any $c\in G(Q,\Lambda)$, there exists a unique
element $(s_1,\ldots, s_r)\in\Bbb Z^r$ such that $c=\eta_1^{s_1}\eta_2^{s_2}\cdots\eta_r^{s_r}$. Define  a  homomorphism of groups
\begin{equation}\label{WB}
\widetilde{} \ :G(Q,\Lambda)\longrightarrow U(\Bbb F),\ \ \widetilde{c}=e_1^{s_1}e_2^{s_2}\cdots e_r^{s_r}.
\end{equation}
If $\widetilde{c}=1$ then $c=\widetilde{c}(q)=1$ and thus $\widetilde{}\ $\  is a monomorphism of groups such that $\widetilde{\eta_i}=e_i$ for all $i=1,\ldots,r$.
\end{notn}

\begin{defn}
 Set
$$\widetilde{Q}=(\widetilde{q}_1,\ldots, \widetilde{q}_n),\ \ \ \widetilde{\Lambda}=(\widetilde{\lambda}_{ij}).$$
Note that $\widetilde{q}_i\neq1$ and that $\widetilde{\Lambda}$ is a multiplicative antisymmetric $n\times n$-matrix.
 Define  $A_n^{\widetilde{Q},\widetilde{\Lambda}}$ to be the $\Bbb F$-algebra generated by $y_1, x_1,\ldots, y_n,x_n$ subject to the relations
\begin{equation}\label{WE}
\begin{aligned}
y_jy_i&=\widetilde{\lambda}_{ji}y_iy_j&& 1\leq i<j\leq n\\
y_jx_i&=\widetilde{\lambda}_{ij}x_iy_j&&1\leq i<j\leq n\\
x_jy_i&=\widetilde{q}_i\widetilde{\lambda}_{ij}y_ix_j&&1\leq i<j\leq n\\
x_jx_i&=\widetilde{q}_{i}^{-1}\widetilde{\lambda}_{ij}^{-1}x_ix_j&&1\leq i<j\leq n\\
x_iy_i-\widetilde{q}_iy_ix_i&=(\widetilde{q}_i-1)\left(1+\sum_{k=1}^{i-1}y_kx_k\right)&\qquad&1\leq  i\leq n.
\end{aligned}
\end{equation}

Write $A$ for $A_n^{\widetilde{Q},\widetilde{\Lambda}}$ for simplicity.
\end{defn}

\begin{lem}\label{SKP}
The algebra  $A$ is the iterated skew polynomial algebra
$$A=\Bbb F[y_1][x_1;\beta^1,\delta^1][y_2;\alpha^2][x_2;\beta^2,\delta^2]\ldots[y_n;\alpha^n][x_n;\beta^n,\delta^n],$$
where
\begin{equation}\label{WF}
\begin{aligned}
\alpha^j(y_i)&=\widetilde{\lambda}_{ji}y_i&&i<j,&\qquad \alpha^j(x_i)&=\widetilde{\lambda}_{ij}x_i&&i<j\\
\beta^j(y_i)&=\widetilde{q}_i\widetilde{\lambda}_{ij}y_i&&i\leq j,&\qquad \beta^j(x_i)&={\widetilde{q}_i}^{-1}{\widetilde{\lambda}_{ij}}^{-1}x_i&&i<j\\
\delta^j(y_i)&=0&&i<j, &\qquad\delta^j(x_i)&=0&&i<j \\
\delta^j(y_j)&=(\widetilde{q}_j-1)\left(1+\sum_{k=1}^{j-1}y_kx_k\right)&&1\leq j\leq n.&&&&
\end{aligned}
\end{equation}
Hence $\{y_1^{r_1}x_1^{s_1}\cdots y_n^{r_n}x_n^{s_n}|r_i,s_i=0,1,\ldots\}$ forms an $\Bbb F$-basis of $A$.
\end{lem}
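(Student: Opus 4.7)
The plan is to construct the iterated Ore extension
\[
B=\Bbb F[y_1][x_1;\beta^1,\delta^1][y_2;\alpha^2][x_2;\beta^2,\delta^2]\cdots[y_n;\alpha^n][x_n;\beta^n,\delta^n]
\]
inductively, verifying at each stage that the automorphism and skew-derivation data prescribed in (\ref{WF}) are well defined. Once $B$ exists, its defining commutation rules are tautologically the relations (\ref{WE}) of $A$, so the universal property of $A$ produces a surjective $\Bbb F$-algebra homomorphism $A\twoheadrightarrow B$. Since $B$ carries the standard PBW basis $\{y_1^{r_1}x_1^{s_1}\cdots y_n^{r_n}x_n^{s_n}\}$ from the general theory of Ore extensions, and the same monomials span $A$ over $\Bbb F$ (because the relations (\ref{WE}) suffice to rewrite any word in the generators into this normal form), it will follow both that $A\cong B$ and that these monomials form an $\Bbb F$-basis of $A$.

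For the construction of $B$, write $B_{j-1}$ for the sub-ring generated by $y_1,x_1,\ldots,y_{j-1},x_{j-1}$ and $B_{j-1}'=B_{j-1}[y_j;\alpha^j]$ for the intermediate ring obtained after adjoining $y_j$ but not yet $x_j$. Both $\alpha^j$ and $\beta^j$ act diagonally on the generators $y_i,x_i$ $(i<j)$, multiplying each by a unit of $\Bbb F$, so their preservation of every commutation relation reduces to elementary scalar bookkeeping with (\ref{WF}); for instance, applied to $y_ky_i=\widetilde{\lambda}_{ki}y_iy_k$ $(i<k<j)$, the map $\alpha^j$ multiplies both sides by the common factor $\widetilde{\lambda}_{jk}\widetilde{\lambda}_{ji}$, and the commutator relation $x_iy_i-\widetilde{q}_iy_ix_i=(\widetilde{q}_i-1)(1+\sum_{k<i}y_kx_k)$ is treated analogously. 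This establishes $\alpha^j\in\mathrm{Aut}(B_{j-1})$ and $\beta^j\in\mathrm{Aut}(B_{j-1}')$.

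The main obstacle will be showing that $\delta^j$ extends to a well-defined left $\beta^j$-derivation of $B_{j-1}'$. Setting $u_j:=(\widetilde{q}_j-1)\bigl(1+\sum_{k<j}y_kx_k\bigr)\in B_{j-1}$, and noting that $\delta^j$ must vanish on $B_{j-1}$ while $\delta^j(y_j)=u_j$, compatibility of $\delta^j$ with the twisting relation $y_jr=\alpha^j(r)y_j$ defining $B_{j-1}'$ reduces to the single identity
\[
u_j\,r\;=\;\beta^j\!\bigl(\alpha^j(r)\bigr)\,u_j\qquad (r\in B_{j-1}),
\]
which it suffices to check for $r=y_i$ and $r=x_i$ with $i<j$. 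Using the commutation rules (\ref{WE}), a direct case analysis on $k<i$, $k=i$, $i<k<j$ shows for $r=y_i$ that $y_kx_k$ commutes with $y_i$ when $k<i$, acquires a factor $\widetilde{q}_i$ when $i<k<j$, and for $k=i$ contributes via the commutator relation an extra $(\widetilde{q}_i-1)y_i$ term which is precisely what combines with the ``$1$'' in $u_j$ to upgrade the leading $y_i$ on the right into $\widetilde{q}_iy_i$; the resulting identity $(1+\sum_{k<j}y_kx_k)y_i=\widetilde{q}_iy_i(1+\sum_{k<j}y_kx_k)$ matches $\beta^j(\alpha^j(y_i))=\widetilde{q}_iy_i$. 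The case $r=x_i$ is parallel with $\widetilde{q}_i^{-1}$ in place of $\widetilde{q}_i$. Once this computation is in hand, the Ore extension $B$ is constructed and the conclusion follows from the opening paragraph.
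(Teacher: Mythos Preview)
Your argument is correct and is precisely the routine verification the paper alludes to with its one-line proof ``It is verified by (\ref{WE})''; you simply carry out the details the paper omits. The approach is the same: check that the prescribed $\alpha^j,\beta^j,\delta^j$ are compatible with the relations (\ref{WE}) so that the iterated Ore extension exists, whence the PBW basis follows.
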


\begin{proof}
It is verified by (\ref{WE}).
\end{proof}

\begin{lem}\label{par}
Let ${\bf  K}$ be the set  consisting of $\lambda\in\Bbb C\setminus\{0,1\}$ such that $e_i(\lambda)\neq0$ for all $1\leq i\leq r$ and the multiplicative subgroup
$\langle e_1(\lambda),\ldots, e_r(\lambda)\rangle$ is a free abelian group with basis $e_1(\lambda),\ldots, e_r(\lambda)$.
Then ${\bf K}$ is  an infinite subset of $\Bbb C\setminus\{0,1\}$ and $q\in{\bf K}$.
\end{lem}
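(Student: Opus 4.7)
The plan is to verify $q\in{\bf K}$ directly from the setup, and then to show that the conditions defining ${\bf K}$ exclude only countably many $\lambda$, so that ${\bf K}$ is the complement of a countable set in $\Bbb C\setminus\{0,1\}$ and in particular uncountable.

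The membership $q\in{\bf K}$ is immediate: by (\ref{WC}) we have $e_i(q)=\eta_i\neq0$ for every $i$, and the multiplicative group $\langle e_1(q),\ldots,e_r(q)\rangle$ equals $\langle\eta_1,\ldots,\eta_r\rangle=G(Q,\Lambda)$, which by the standing assumption is free abelian with basis $\{\eta_1,\ldots,\eta_r\}$.

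For the infinitude of ${\bf K}$, first discard the finite set of $\lambda\in\Bbb C$ where some $e_i$ vanishes (at most $2r$ points). Next, the group $\langle e_1(\lambda),\ldots,e_r(\lambda)\rangle$ fails to be free abelian with the prescribed basis precisely when there exists a nonzero $(s_1,\ldots,s_r)\in\Bbb Z^r$ with $\prod_{i=1}^{r}e_i(\lambda)^{s_i}=1$. For each fixed nonzero $\vec{s}$, the set of such $\lambda$ is the zero set of the rational function $\prod_{i=1}^{r}e_i(t)^{s_i}-1\in\Bbb C(t)$, hence finite provided that function is not identically zero. Ranging $\vec{s}$ over the countable set $\Bbb Z^r\setminus\{0\}$ then gives a countable union of finite sets, so the complement of ${\bf K}$ in $\Bbb C\setminus\{0,1\}$ is countable and ${\bf K}$ is uncountable.

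The only non-formal step, and the main obstacle, is ruling out the identity $\prod_{i=1}^{r}e_i(t)^{s_i}\equiv1$ in $\Bbb C(t)$ for nonzero $\vec{s}$. This is where the $\Bbb Q$-linear independence of $\{\mu_1,\ldots,\mu_r\}$ enters. Since $e_i(1)=1\neq0$, the supposed identity is valid in a neighborhood of $t=1$, and taking its logarithmic derivative and evaluating at $t=1$ yields
$$\sum_{i=1}^{r}s_i\,\frac{e_i'(1)}{e_i(1)}=\sum_{i=1}^{r}s_i\mu_i=0,$$
which forces $s_1=\cdots=s_r=0$ by the $\Bbb Q$-linear independence of the $\mu_i$. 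This contradiction completes the argument.
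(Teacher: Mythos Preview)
Your proof is correct, and it takes a genuinely different route from the paper's. The paper works analytically: it chooses a branch of the logarithm, sets $f_i(t)=\log e_i(t)$ on a neighborhood of $q$, and proves by induction on $r$ that there are uncountably many $\lambda$ for which $\{f_i(\lambda)\}$ is $\Bbb Q$-linearly independent, after first checking that each $f_i$ is nonconstant and each ratio $f_i/f_j$ is nonconstant (the latter using that $\{\eta_i\}$ is a basis of $G(Q,\Lambda)$ and that the $e_i$ are polynomials). Your argument is purely algebraic: for each nonzero $\vec{s}\in\Bbb Z^r$ you observe that the bad set is the zero locus of the rational function $\prod_i e_i(t)^{s_i}-1$, hence finite once that function is nonzero, and you verify nonvanishing via the logarithmic derivative at $t=1$ together with the $\Bbb Q$-linear independence of the $\mu_i$. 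Your approach is shorter and avoids branches of $\log$ and the somewhat delicate induction; on the other hand, the paper's argument does not invoke the $\mu_i$ at all (it leans instead on the basis property of the $\eta_i$), so the two proofs actually exploit different pieces of the standing hypotheses.
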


\begin{proof}
Let $f_1(t),\ldots, f_r(t)$ be nonconstant and continuous  functions on a non-empty open set $U\subseteq\Bbb C\setminus\{0,1\}$ such that $f_i(t)/f_j(t)$ is nonconstant for each pair $(i,j)$ with $i\neq j$.
We show that there are uncountably many $0,1\neq\lambda\in\Bbb C$ such that  $\{f_i(\lambda)\}$ is $\Bbb Q$-linearly independent. We proceed by using induction on $r$. If $r=1$ then it is true clearly since $f_1$ is a nonconstant and continuous function.  Suppose that $r>1$ and that the claim is true for $r-1$. By the induction hypothesis, there are uncountably many $0,1\neq\lambda\in\Bbb C$ such that  $\sum_{i=1}^{r-1}a_i(f_i/f_r)(\lambda)\neq0$  for any $0\neq(a_1,\ldots,a_{r-1})\in\Bbb Q^{r-1}$. Hence, for any $a_r\in\Bbb Q$ such that $0\neq(a_1,\ldots,a_{r})$,  $\sum_{i=1}^{r-1} a_i(f_i/f_r)(\lambda)+a_r\neq0$ for uncountably many $0,1\neq\lambda\in \Bbb C$ since the cardinality of $\Bbb Q$ is countable. It follows that  $\sum_{i=1}^{r}a_if_i(\lambda)\neq0$ for any $0\neq(a_1,\ldots,a_r)\in\Bbb Q^r$ and thus the claim is verified.

Let $\ell$ be a half line starting from the origin such that $e_i(q)\notin\ell$ for all $i=1,\ldots,r$. Set $f_i(t)=\log e_i(t)$ for $i=1,\ldots,r$, where the function $\log$ is defined in the branch determined by $\ell$. Then all $f_i(t)$ are nonconstant and continuous on a nonempty open set $q\in U\subseteq\Bbb C\setminus\{0,1\}$
such that each $e_i(U)$ is contained in the given branch
since  $e_i(t)$ is a nonconstant polynomial by (\ref{WC}). For each pair $(i,j)$ with $i\neq j$,  suppose that $f_i(t)/f_j(t)=c\in\Bbb Q$. Then $\eta_i=e_i(q)=e_j^c(q)=\eta_j^c$, which is impossible since $\{\eta_k\}_{k=1}^r$ is a basis of $G(Q,\Lambda)$. If $f_i(t)/f_j(t)=c\in\Bbb C\setminus\Bbb Q$ then $e_i=e_j^c$, a contradiction since all $e_i$ are polynomials. Hence, for each pair $(i,j)$ with $i\neq j$,  $f_i(t)/f_j(t)$ is not constant.
 Therefore  there are uncountably many $0,1\neq\lambda\in\Bbb C$ such that  $\{f_i(\lambda)\}$ is $\Bbb Q$-linearly independent by the claim. This completes the proof.
\end{proof}

We apply Notation~\ref{ASSUM} to the triple $({\bf K},\Bbb F, A)$. Note that $({\bf K},\Bbb F, A)$ satisfies Notation~\ref{ASSUM}(1)-(3).

\begin{notn}
Note that $t-1$ is a nonzero, nonunit and non-zero-divisor of $A$.
Set $$A_1=A/(t-1)A.$$

It is proved by the proof of Theorem~\ref{ZZX}(3) that $A_1$ is a commutative $\Bbb C$-algebra. Thus $({\bf K},\Bbb F, A)$ satisfies the condition of Notation~\ref{ASSUM}(4).
\end{notn}

For each $\lambda\in{\bf K}$, the $\Bbb C$-algebra $A_\lambda$
is defined by Notation~\ref{ASSUM}(5).

\begin{prop}\label{ZZW}
(1) For each $\lambda\in{\bf K}$,  $A_{\lambda}$ is the $\Bbb C$-algebra
 generated by $y_1,x_1,\ldots, y_n,x_n$ subject to the relations
\begin{equation}\label{WFF}
\begin{aligned}
y_jy_i&=\widetilde{\lambda}_{ji}(\lambda)y_iy_j&& 1\leq i<j\leq n\\
y_jx_i&=\widetilde{\lambda}_{ij}(\lambda)x_iy_j&&1\leq i<j\leq n\\
x_jy_i&=\widetilde{q}_i(\lambda)\widetilde{\lambda}_{ij}(\lambda)y_ix_j&&1\leq i<j\leq n\\
x_jx_i&=\widetilde{q}_{i}^{-1}(\lambda)\widetilde{\lambda}_{ij}^{-1}(\lambda)x_ix_j&&1\leq i<j\leq n\\
x_iy_i-\widetilde{q}_i(\lambda)y_ix_i&=(\widetilde{q}_i(\lambda)-1)\left(1+\sum_{k=1}^{i-1}y_kx_k\right)&\qquad&1\leq  i\leq n
\end{aligned}
\end{equation}
and an iterated skew polynomial algebra
$$A_\lambda=\Bbb C[y_1][x_1;\beta_\lambda^1,\delta_\lambda^1][y_2;\alpha_\lambda^2][x_2;\beta_\lambda^2,\delta_\lambda^2]\cdots[y_n;\alpha_\lambda^n][x_n;\beta_\lambda^n,\delta_\lambda^n]$$ for suitable $\alpha_\lambda^i,\beta_\lambda^i,\delta_\lambda^i$. Hence $\{y_1^{r_1}x_1^{s_1}\cdots y_n^{r_n}x_n^{s_n}|r_i,s_i=0,1,\ldots\}$ forms a $\Bbb C$-basis of $A_\lambda$.

(2) In particular,
 $A_{q}$ is
the $\Bbb C$-algebra generated by $y_1,x_1,\ldots, y_n,x_n$ subject to the relations
\begin{equation}\label{WG}
\begin{aligned}
y_jy_i&={\lambda}_{ji}y_iy_j&& 1\leq i<j\leq n\\
y_jx_i&={\lambda}_{ij}x_iy_j&&1\leq i<j\leq n\\
x_jy_i&={q}_i{\lambda}_{ij}y_ix_j&&1\leq i<j\leq n\\
x_jx_i&={q}_{i}^{-1}{\lambda}_{ij}^{-1}x_ix_j&&1\leq i<j\leq n\\
x_iy_i-{q}_iy_ix_i&=({q}_i-1)\left(1+\sum_{k=1}^{i-1}y_kx_k\right)&\qquad&1\leq  i\leq n.
\end{aligned}
\end{equation}
\end{prop}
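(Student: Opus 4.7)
The plan is to apply Notation~\ref{ASSUM}(5) directly to the presentation (\ref{WE}) and then mimic the iterated skew polynomial construction of Lemma~\ref{SKP} with each structural coefficient in $\Bbb F$ replaced by its evaluation at $\lambda$. Because $\lambda\in{\bf K}$, Lemma~\ref{par} guarantees $e_i(\lambda)\neq0$ for every $i$, so $\widetilde{q}_i(\lambda)$ and $\widetilde{\lambda}_{ij}(\lambda)$ are well-defined nonzero complex numbers, and all the data appearing in the specialized relations are legitimate scalars. Part (2) will then follow immediately from the defining identities (\ref{WC}) of the polynomials $e_i$ at $t=q$.

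For part (1), $A_\lambda$ is by construction the $\Bbb C$-algebra on generators $y_1,x_1,\ldots,y_n,x_n$ subject to the relations of (\ref{WE}) evaluated at $t=\lambda$; since each coefficient lies in $\Bbb F$ and $\lambda\in{\bf K}$, such evaluation makes sense and produces exactly (\ref{WFF}). To exhibit $A_\lambda$ as an iterated skew polynomial algebra I would define $\alpha_\lambda^j,\beta_\lambda^j,\delta_\lambda^j$ by the formulas of (\ref{WF}) with every $\widetilde{q}_i$ and $\widetilde{\lambda}_{ij}$ replaced by $\widetilde{q}_i(\lambda)$ and $\widetilde{\lambda}_{ij}(\lambda)$, and then check that $\beta_\lambda^j$ is an automorphism and $\delta_\lambda^j$ is a $\beta_\lambda^j$-derivation of the preceding subalgebra. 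This verification is formally identical to the one carried out in Lemma~\ref{SKP}: the only identities required are the antisymmetry $\widetilde{\lambda}_{ij}\widetilde{\lambda}_{ji}=1$ and the compatibility of $\delta^j$ with the lower-indexed generators, both of which are preserved under evaluation at $\lambda$. Hence $A_\lambda$ is an iterated Ore extension over $\Bbb C[y_1]$, and the standard monomials $y_1^{r_1}x_1^{s_1}\cdots y_n^{r_n}x_n^{s_n}$ form a $\Bbb C$-basis.

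For part (2), specialize at $\lambda=q$. By (\ref{WC}) we have $e_i(q)=\eta_i$, so multiplicativity of the map $\widetilde{\ }$ in (\ref{WB}) gives $\widetilde{c}(q)=c$ for every $c\in G(Q,\Lambda)$; in particular $\widetilde{q}_i(q)=q_i$ and $\widetilde{\lambda}_{ij}(q)=\lambda_{ij}$. Inserting these values into (\ref{WFF}) produces the relations (\ref{WG}), and the iterated skew polynomial structure for $A_q$ follows from part (1). The main obstacle I would expect is the bookkeeping for the skew derivation axiom for $\delta_\lambda^j$, since its value at $y_j$ involves the products $y_k x_k$ for $k<j$ and therefore interacts with every earlier $\beta^j$ identity; however, the required equality is already known to hold in $\Bbb F$ at the generic level, so evaluation at $t=\lambda$ inherits it automatically, and no independent calculation at $\lambda$ is actually needed.
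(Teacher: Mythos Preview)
Your proposal is correct and follows essentially the same approach as the paper: part (1) is obtained by evaluating the relations (\ref{WE}) and the skew polynomial data (\ref{WF}) at $t=\lambda$, and part (2) by observing that $e_i(q)=\eta_i$ forces $\widetilde{q}_i(q)=q_i$ and $\widetilde{\lambda}_{ij}(q)=\lambda_{ij}$. The paper's own proof is in fact terser than yours, simply citing (\ref{WE}), (\ref{WF}), and the identity $e_i(q)=\eta_i$ without spelling out the skew-derivation verification you discuss.
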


\begin{proof}
(1) It follows from (\ref{WE}) and (\ref{WF}).

(2)
Since $q\in{\bf K}$ and $e_i(q)=\eta_i$ for all $i$,
$$\widetilde{\lambda}_{ij}(q)=\lambda_{ij},\ \ \widetilde{q}_{i}(q)=q_{i} $$
for all $i,j$. Hence the result follows from (\ref{WFF}) by replacing $\lambda$ by $q$.
\end{proof}

\begin{lem}\label{IS}
The multi-parameter quantized Weyl algebra $R_n^{Q,\Lambda}$ is isomorphic to $A_{q}$.
\end{lem}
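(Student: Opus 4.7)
The plan is to construct an explicit isomorphism by rescaling the $x_i$ generators. First I would compare the defining relations (\ref{WA}) of $R_n^{Q,\Lambda}$ with the presentation (\ref{WG}) of $A_q$ from Proposition~\ref{ZZW}(2). All four commutation identities among $y_i, y_j, x_i, x_j$ for $i \neq j$ coincide verbatim; the only discrepancy is in the last family of relations. Namely, in $R_n^{Q,\Lambda}$ one has $x_iy_i - q_iy_ix_i = 1 + \sum_{k<i}(q_k-1)y_kx_k$, whereas in $A_q$ one has $x_iy_i - q_iy_ix_i = (q_i-1)\bigl(1 + \sum_{k<i}y_kx_k\bigr)$. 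The task therefore reduces to absorbing these $(q_i-1)$ factors by a diagonal rescaling of the generators.

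Concretely, I propose the map $\phi : R_n^{Q,\Lambda} \longrightarrow A_q$ defined on generators by $\phi(y_i) = y_i$ and $\phi(x_i) = (q_i-1)^{-1} x_i$, which is well-defined since $q_i \neq 1$ by Definition~\ref{Weyl}. The quadratic relations among distinct-index generators are invariant under such a rescaling and transport automatically. For the remaining relation, substituting $\phi(y_i), \phi(x_i)$ and then using the $A_q$-identity $x_iy_i - q_iy_ix_i = (q_i-1)(1+\sum_{k<i}y_kx_k)$, I would check that the constant-term equation $(q_i-1)^{-1}(q_i-1) = 1$ and the coefficient equations $(q_i-1)^{-1}(q_i-1) = (q_k-1)(q_k-1)^{-1}$ for $k<i$ hold. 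Hence $\phi$ extends to an algebra homomorphism by the universal property of $R_n^{Q,\Lambda}$.

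The inverse is produced symmetrically: the map $\psi : A_q \longrightarrow R_n^{Q,\Lambda}$ defined by $\psi(y_i) = y_i$, $\psi(x_i) = (q_i-1)x_i$ is well-defined by the mirror verification, and clearly $\psi\circ\phi$ and $\phi\circ\psi$ fix every generator, hence equal the respective identity maps. Bijectivity can also be read off independently from the fact that both algebras admit the standard monomial basis $\{y_1^{r_1}x_1^{s_1}\cdots y_n^{r_n}x_n^{s_n}\}$—for $A_q$ this is the specialization at $\lambda = q$ of Proposition~\ref{ZZW}(1), while for $R_n^{Q,\Lambda}$ it is the well-known iterated skew polynomial structure from \cite{Mal}—and $\phi$ carries one basis bijectively to nonzero scalar multiples of the other.

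I do not anticipate a genuine obstacle: the only piece of insight required is spotting the correct rescaling $c_i = (q_i-1)^{-1}$, after which the two relation systems are literally identified. The lemma therefore should occupy only a few lines in the final write-up.
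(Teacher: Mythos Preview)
Your proposal is correct and follows essentially the same approach as the paper: a diagonal rescaling of generators that absorbs the factors $(q_i-1)$. The only cosmetic difference is that the paper rescales $y_i$ rather than $x_i$, defining $\psi(x_i)=x_i$, $\psi(y_i)=(q_i-1)^{-1}y_i$; either choice works for the same reason, since the product $y_kx_k$ picks up exactly one factor $(q_k-1)^{-1}$ in each case.
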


\begin{proof}
Define a homomorphism of $\Bbb C$-algebras
\begin{equation}\label{iso}
\psi:R_n^{Q,\Lambda}\longrightarrow A_{q}  \ \ \psi(x_i)=x_i,\ \psi(y_i)=(q_i-1)^{-1}y_i
 \end{equation}
 for all $i$. Then $\psi$ is an isomorphism by (\ref{WA}) and (\ref{WG}).
\end{proof}

\begin{thm}\label{ZZX}
Let $G(Q,\Lambda)$ be torsion free.

(1) $\widetilde{Q}'(1):=(\widetilde{q}_1'(1),\ldots, \widetilde{q}_n'(1))$ is an $n$-tuple of elements of $\Bbb C\setminus\{0\}$.

(2) $\widetilde{\Lambda}'(1):=\left(\widetilde{\lambda}_{ij}'(1)\right)$ is a skew-symmetric $n\times n$-matrix over $\Bbb C$.

(3)   The $\Bbb C$-algebra $A_1$ is  the
Poisson algebra $\Bbb C[y_1,x_1,\ldots, y_n,x_n]$ with Poisson bracket
\begin{equation}\label{WH}
\begin{aligned}
\{y_j,y_i\}&=\widetilde{\lambda}_{ji}'(1)y_iy_j&& 1\leq i<j\leq n\\
\{y_j,x_i\}&=\widetilde{\lambda}_{ij}'(1)x_iy_j&&1\leq i<j\leq n\\
\{x_j,y_i\}&=\left(\widetilde{q}_{i}'(1)+\widetilde{\lambda}_{ij}'(1)\right)y_ix_j&&1\leq i<j\leq n\\
\{x_j,x_i\}&=-\left(\widetilde{q}_{i}'(1)+\widetilde{\lambda}_{ij}'(1)\right)x_ix_j&&1\leq i<j\leq n\\
\{x_i,y_i\}&=\widetilde{q}_i'(1)\left(1+\sum_{k=1}^{i}y_kx_k\right)&\qquad&1\leq i\leq n,
\end{aligned}
\end{equation}
where $\widetilde{q}_i'$, $\widetilde{\lambda}_{ij}'$ are the  formal derivatives of  $\widetilde{q}_i$, $\widetilde{\lambda}_{ij}$ respectively.
We will call $A_1$ {\it the  multi-parameter Poisson Weyl algebra}.
\end{thm}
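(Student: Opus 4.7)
The approach is organized around the homomorphism $\widetilde{\ }:G(Q,\Lambda)\to U(\Bbb F)$ from (\ref{WB}), whose building blocks $e_i$ satisfy $e_i(1)=1$ and $e_i'(1)=\mu_i$. Writing any $c\in G(Q,\Lambda)$ as $c=\eta_1^{s_1}\cdots\eta_r^{s_r}$ yields $\widetilde{c}=e_1^{s_1}\cdots e_r^{s_r}$, and by the product rule $\widetilde{c}(1)=1$ and $\widetilde{c}'(1)=s_1\mu_1+\cdots+s_r\mu_r$. These two identities, together with $\Bbb Q$-linear independence of $\{\mu_1,\ldots,\mu_r\}$, drive the entire argument.

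For (1), apply this to $c=q_i$: since $q_i\neq 1$ and $\{\eta_j\}$ is a $\Bbb Z$-basis of $G(Q,\Lambda)$, not all exponents vanish, so $\Bbb Q$-linear independence of the $\mu_j$ forces $\widetilde{q}_i'(1)\neq 0$. For (2), $\lambda_{ji}=\lambda_{ij}^{-1}$ and the homomorphism property give $\widetilde{\lambda}_{ij}\widetilde{\lambda}_{ji}=1$ in $\Bbb F$; differentiating and using $\widetilde{\lambda}_{ij}(1)=\widetilde{\lambda}_{ji}(1)=1$ yields $\widetilde{\lambda}_{ji}'(1)=-\widetilde{\lambda}_{ij}'(1)$, while $\widetilde{\lambda}_{ii}=1$ gives the diagonal zeros.

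For (3), I would first identify $A_1$ as a commutative polynomial algebra. Lemma~\ref{SKP} supplies the PBW $\Bbb F$-basis of standard monomials in $A$, so, since $\Bbb F/(t-1)\Bbb F\cong\Bbb C$, the algebra $A_1$ inherits the images as a $\Bbb C$-basis. Reducing each relation in (\ref{WE}) modulo $t-1$, the coefficients $\widetilde{\lambda}_{ij},\widetilde{q}_i$ all evaluate to $1$ and the factor $\widetilde{q}_i-1$ in the last relation vanishes, so every commutator of generators dies and $A_1\cong\Bbb C[y_1,x_1,\ldots,y_n,x_n]$; this simultaneously verifies the commutativity hypothesis of Notation~\ref{ASSUM}(4). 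The Poisson structure then follows by applying $\{\overline{a},\overline{b}\}=\overline{(t-1)^{-1}(ab-ba)}$ to each generator pair: every relation in (\ref{WE}) writes $ab-ba$ as $(c-1)\cdot m$ for a monomial $m$ and a unit $c\in U(\Bbb F)$ with $c(1)=1$, and $(t-1)^{-1}(c-1)$ evaluates at $t=1$ to $c'(1)$. For the diagonal bracket I would first rewrite $x_iy_i-y_ix_i=(\widetilde{q}_i-1)\bigl(1+\sum_{k=1}^{i}y_kx_k\bigr)$ by absorbing $(\widetilde{q}_i-1)y_ix_i$ into the sum; for the cross brackets the product rule at $t=1$ converts $(\widetilde{q}_i\widetilde{\lambda}_{ij})'(1)$ into $\widetilde{q}_i'(1)+\widetilde{\lambda}_{ij}'(1)$ (and analogously with a minus sign for the inverse), matching (\ref{WH}).

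The only real obstacle is bookkeeping, in particular the diagonal case $\{x_i,y_i\}$ where the summation index must be extended from $i-1$ to $i$ via the absorption trick; no computation is deeper than a one-line application of the product rule at $t=1$, and no further input beyond $e_i(1)=1$, $e_i'(1)=\mu_i$, and $\Bbb Q$-linear independence of the $\mu_i$ is required.
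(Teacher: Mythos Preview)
Your proposal is correct and follows essentially the same approach as the paper: both arguments reduce (1) and (2) to the identities $\widetilde{c}(1)=1$ and $\widetilde{c}'(1)=\sum s_j\mu_j$ coming from (\ref{WC}) and (\ref{WB}), and both obtain (3) by evaluating the defining relations (\ref{WE}) at $t=1$ to see commutativity and then computing each bracket as $\bigl((c-1)/(t-1)\bigr)(1)=c'(1)$. You are merely a bit more explicit than the paper in two places---invoking the PBW basis of Lemma~\ref{SKP} to identify $A_1$ with the polynomial ring, and spelling out the absorption of $(\widetilde{q}_i-1)y_ix_i$ into the sum for the diagonal bracket---but these are exactly the steps the paper leaves implicit under ``similarly.''
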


\begin{rem}
Note that $({\bf K},\Bbb F, A)$ satisfies the condition of Notation~\ref{ASSUM}(7) by Lemma~\ref{SKP}, Proposition~\ref{ZZW}(1) and Theorem~\ref{ZZX}(3).
We retain the notations in Notation~\ref{ASSUM} for $({\bf K},\Bbb F,A)$.
\end{rem}

\begin{proof}[Proof of Theorem~\ref{ZZX}]
(1) Let $q_i=\eta_1^{s_{1i}}\eta_2^{s_{2i}}\cdots\eta_r^{s_{ri}}$.
Since $q_i\neq1$,  $s_{ki}\neq0$ for some $k$. Thus, by (\ref{WC}),
$$\widetilde{q_i}'(1)=s_{1i}\mu_1+\ldots+s_{ri}\mu_r\neq0.$$

(2) For all $1\leq i,j\leq n$, since $\lambda_{ij}\lambda_{ji}=1$, $\widetilde{\lambda}_{ij}\widetilde{\lambda}_{ji}=1$ by (\ref{WB}) and thus $\widetilde{\lambda}_{ij}'(1)+\widetilde{\lambda}_{ji}'(1)=0$ by (\ref{WC}).

(3) Since $e_i(1)=1$ by (\ref{WC}),  $\widetilde{q}_i(1)=1, \widetilde{\lambda}_{ij}(1)=1$ for all $i,j$ and thus $A_1$ is commutative by (\ref{WE}). Let us find the Poisson bracket in $A_1$. For $i<j$,
$$\{y_j,y_i\}=(t-1)^{-1}(y_jy_i-y_iy_j)|_{t=1}
=\left(\frac{\widetilde{\lambda}_{ji}-1}{t-1}\right)(1)y_iy_j=\widetilde{\lambda}_{ji}'(1)y_iy_j
$$
and thus the first formula of (\ref{WH}) is obtained. The others of (\ref{WH}) are obtained similarly.
\end{proof}

\begin{notn}
In $A_{q}$ and $A_1$, set
$$z_0=1,\ \ \ z_i=1+\sum_{k=1}^iy_kx_k$$
for $i=1,\ldots,n$.
\end{notn}

\begin{lem}\label{BR}
(1) In $A_{q}$,
$$x_iy_i-q_iy_ix_i=(q_i-1)z_{i-1},\ \ \ x_iy_i-y_ix_i=(q_i-1)z_i,  \ \ \ 1\leq i\leq n$$
and
$$\begin{array}{llll}
y_j z_i=z_iy_j&  i<j;  &x_j z_i=z_ix_j& i<j;\\
y_j z_i=q_j^{-1}z_iy_j& i\geq j;  &x_j z_i=q_jz_ix_j& i\geq j;\\
z_iz_j=z_jz_i&\text{all } i,j.&&
\end{array}$$

(2) In $A_1$,
$$\{x_i,y_i\}-\widetilde{q}_i'(1)y_ix_i=\widetilde{q}_i'(1)z_{i-1},\ \ \ \{x_i, y_i\}=\widetilde{q}_i'(1)z_i,  \ \ \ 1\leq i\leq n$$
and
$$\begin{array}{llll}
\{y_j, z_i\}=0& i<j;  &\{x_j, z_i\}=0& i<j;\\
\{y_j, z_i\}=-\widetilde{q}_j'(1)y_jz_i& i\geq j;  &\{x_j, z_i\}=\widetilde{q}_j'(1)x_jz_i& i\geq j;\\
\{z_i, z_j\}=0&\text{all } i,j.&&
\end{array}$$
\end{lem}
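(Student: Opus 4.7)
The plan is to prove part (1) directly from the relations (\ref{WG}) of $A_q$ given by Proposition~\ref{ZZW}(2), and then obtain part (2) by transferring the analogous identities from $A$ through the semiclassical limit $\gamma_1$. To avoid doing essentially the same bookkeeping twice, I would first establish all the claimed identities in $A$ itself, with $\widetilde{q}_i,\widetilde{\lambda}_{ij}$ in place of $q_i,\lambda_{ij}$, using (\ref{WE}); specialising at $t=q$ gives (1), while applying $\gamma_1$ gives (2).

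The two commutator identities in the first block of (1) need no induction. The first, $x_iy_i-q_iy_ix_i=(q_i-1)z_{i-1}$, is literally the last relation of (\ref{WG}) since $z_{i-1}=1+\sum_{k=1}^{i-1}y_kx_k$ by definition. The second follows by adding $(q_i-1)y_ix_i$ to both sides and recognising $z_{i-1}+y_ix_i=z_i$.

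For the commutation relations with $z_i$, I would induct on $i$ (treating $j$ fixed). The case $j>i$ is immediate: sliding $y_j$ (or $x_j$) through each summand $y_kx_k$ of $z_i$ produces matching powers of $\widetilde{\lambda}_{jk}$ and $\widetilde{\lambda}_{kj}=\widetilde{\lambda}_{jk}^{-1}$ that cancel, so each $y_kx_k$ with $k<j$ is individually central with respect to $y_j$ and $x_j$. The case $j\leq i$ first needs the base step $i=j$: rearranging the last relation of (\ref{WE}) gives $\widetilde{q}_jz_j=z_{j-1}+x_jy_j$, and then using scalarity of $\widetilde{q}_j$ together with the already-settled $i=j-1<j$ case $y_jz_{j-1}=z_{j-1}y_j$, one computes $\widetilde{q}_jy_jz_j=y_j(z_{j-1}+x_jy_j)=z_{j-1}y_j+y_jx_jy_j=(z_{j-1}+y_jx_j)y_j=z_jy_j$, yielding $y_jz_j=\widetilde{q}_j^{-1}z_jy_j$. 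The inductive step from $i-1$ to $i$ for $i>j$ writes $z_i=z_{i-1}+y_ix_i$; the first summand is handled by hypothesis, and the term $y_jy_ix_i$ is reduced by the same $\widetilde{\lambda}$-cancellation as in the easy case, but now picks up a factor of $\widetilde{q}_j^{-1}$ from the relation $y_jx_i=\widetilde{q}_j^{-1}\widetilde{\lambda}_{ij}x_iy_j$ (for $j<i$), matching the desired $\widetilde{q}_j^{-1}$. The computation for $x_jz_i$ is entirely symmetric. With these in hand, $z_iz_j=z_jz_i$ (assuming $i\leq j$) follows by expanding $z_j=1+\sum_{k=1}^jy_kx_k$ and noting that for each $k$, the $\widetilde{q}_k^{\pm 1}$ factors introduced by passing $z_i$ through $y_k$ and then through $x_k$ cancel, so $z_iy_kx_k=y_kx_kz_i$.

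For part (2), apply $\gamma_1$ to each identity just proved in $A$ and invoke $\{\gamma_1(a),\gamma_1(b)\}=\gamma_1((t-1)^{-1}(ab-ba))$ from Notation~\ref{ASSUM}(4). Since $e_k(1)=1$ by (\ref{WC}), $\widetilde{q}_i(1)=\widetilde{\lambda}_{ij}(1)=1$, so each quotient $(\widetilde{q}_i-1)/(t-1)$ or $(\widetilde{\lambda}_{ij}-1)/(t-1)$ is a well-defined element of $\Bbb F$ whose value at $t=1$ is the derivative $\widetilde{q}_i'(1)$ or $\widetilde{\lambda}_{ij}'(1)$. This converts each $A$-level commutator relation into the corresponding Poisson identity of (2); commutativity of $A_1$ lets one freely identify $y_ix_i$ with $x_iy_i$ and so rewrite $z_i-y_ix_i$ as $z_{i-1}$ wherever needed. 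The main obstacle throughout is the $\widetilde{q}_j$-bookkeeping in the case $j\leq i$, where one must combine the base case $i=j$ (the only place the nontrivial last relation of (\ref{WE}) enters) with the earlier $i<j$ case inside a single induction; everything else is Leibniz-rule juggling.
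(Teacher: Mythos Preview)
Your proposal is correct and matches the paper's approach: the paper's proof is the single sentence ``These are checked routinely by (\ref{WG}) and (\ref{WH}),'' and your detailed plan carries out exactly that routine verification. Your choice to prove everything once in $A$ with $\widetilde{q}_i,\widetilde{\lambda}_{ij}$ and then specialise to $t=q$ for (1) and pass through $\gamma_1$ for (2) is a mild organizational improvement over checking (\ref{WG}) and (\ref{WH}) separately, but not a genuinely different argument.
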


\begin{proof}
These are checked routinely by (\ref{WG}) and (\ref{WH}).
\end{proof}

\begin{defn}
A subset $T$ of $\mathcal{M}_n=\{z_1,z_2, y_2, x_2, z_3,y_3,x_3,\ldots, z_n, y_n,x_n\}$ is said to be {\it admissible} if $T$ satisfies the condition:
For $2\leq i\leq n$,
$$y_i\in T\text{ or } x_i\in T\Leftrightarrow z_i\in T\text{ and }z_{i-1}\in T.$$

Note that if $T$ is an admissible set  of $A_q$ then the ideal $\langle T\rangle$ generated by $T$ is equal to the right (left) ideal generated by $T$ since each $z_i$ is normal and
$y_i$ and $x_i$ are normal modulo $\langle z_{i-1}\rangle$ by Lemma~\ref{BR}(1).
\end{defn}

\begin{lem}\label{AP}
(1) Let $P$ be a prime ideal of $A_{q}$. Then $P\cap\mathcal{M}_n$ is an admissible set.

(2) Let $P$ be a Poisson prime ideal of $A_1$. Then $P\cap\mathcal{M}_n$ is an admissible set.
\end{lem}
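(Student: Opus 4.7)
The plan is to reduce both parts to the single algebraic identity $y_ix_i = z_i - z_{i-1}$ (valid in each of $A_q$ and $A_1$), combined with the two separate commutator/bracket relations from Lemma~\ref{BR}. Both directions of the biconditional in the definition of admissibility follow for each $i\geq 2$, working entirely inside the pair of generators $\{y_i,x_i,z_{i-1},z_i\}$; because the admissibility condition involves only the indices $i\geq 2$ and $y_1,x_1\notin\mathcal{M}_n$, no case analysis for $i=1$ is needed.

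For part (1), I would first derive $y_ix_i=z_i-z_{i-1}$ by subtracting the two identities
\[
x_iy_i-q_iy_ix_i=(q_i-1)z_{i-1},\qquad x_iy_i-y_ix_i=(q_i-1)z_i
\]
of Lemma~\ref{BR}(1) and dividing by $q_i-1\neq 0$. For the implication $(\Rightarrow)$, suppose $y_i\in P$ (the case $x_i\in P$ being symmetric). Then both $x_iy_i$ and $y_ix_i$ lie in $P$, so from the two displayed identities and $q_i\neq 1$ we get $z_{i-1}\in P$ and $z_i\in P$. For $(\Leftarrow)$, if $z_{i-1},z_i\in P$ then $y_ix_i=z_i-z_{i-1}\in P$, and primality of $P$ forces $y_i\in P$ or $x_i\in P$.

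For part (2), the same strategy works, with the Poisson bracket replacing the commutator. From Lemma~\ref{BR}(2) I would subtract
\[
\{x_i,y_i\}-\widetilde{q}_i'(1)y_ix_i=\widetilde{q}_i'(1)z_{i-1},\qquad \{x_i,y_i\}=\widetilde{q}_i'(1)z_i
\]
and use $\widetilde{q}_i'(1)\neq 0$ (which holds by Theorem~\ref{ZZX}(1)) to again obtain $y_ix_i=z_i-z_{i-1}$. For $(\Rightarrow)$, if $y_i\in P$ (resp.\ $x_i\in P$), then because $P$ is a \emph{Poisson} ideal we have $\{x_i,y_i\}\in P$, and the second identity of Lemma~\ref{BR}(2) gives $z_i\in P$; combined with $y_ix_i\in P$ and $y_ix_i=z_i-z_{i-1}$, this yields $z_{i-1}\in P$. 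For $(\Leftarrow)$, $z_{i-1},z_i\in P$ gives $y_ix_i\in P$, and then primality of $P$ (a Poisson prime ideal in the noetherian $A_1\cong\mathbb{C}[y_1,x_1,\ldots,y_n,x_n]$ is prime by the remark in Section~0) gives $y_i\in P$ or $x_i\in P$.

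The proof is essentially just a bookkeeping exercise, so there is no serious obstacle. The only subtle point to flag is that the Poisson case \emph{requires} $P$ to be stable under $\{-,-\}$: the identity $y_ix_i=z_i-z_{i-1}$ alone only produces $z_i-z_{i-1}\in P$ from $y_i\in P$, and it is the Poisson bracket relation $\{x_i,y_i\}=\widetilde{q}_i'(1)z_i$ that separates this into the two conclusions $z_i\in P$ and $z_{i-1}\in P$. This is the exact analogue of how, in part (1), \emph{two} independent commutator identities are needed rather than one.
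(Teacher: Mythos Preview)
Your argument for part~(2) and for the $(\Rightarrow)$ direction of part~(1) is correct and matches the paper. However, there is a genuine gap in your $(\Leftarrow)$ direction for part~(1). You write that $y_ix_i\in P$ ``and primality of $P$ forces $y_i\in P$ or $x_i\in P$.'' In a noncommutative ring this inference is not valid: a prime ideal $P$ satisfies $IJ\subseteq P\Rightarrow I\subseteq P$ or $J\subseteq P$ for \emph{ideals} $I,J$ (equivalently, $aRb\subseteq P\Rightarrow a\in P$ or $b\in P$), not $ab\in P\Rightarrow a\in P$ or $b\in P$ for elements. The algebra $A_q$ is genuinely noncommutative, so some extra structure is needed.

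The paper supplies that structure: by Lemma~\ref{BR}(1) and the defining relations~(\ref{WG}), the elements $y_i$ and $x_i$ are normal modulo $\langle z_{i-1}\rangle$ (each $q$-commutes with every generator once one works modulo $z_{i-1}$). Since $z_{i-1}\in P$, the two-sided ideals $\langle y_i,z_{i-1}\rangle$ and $\langle x_i,z_{i-1}\rangle$ are each one-sided principal over $\langle z_{i-1}\rangle$, and their product lies in $A_q\,y_ix_i\,A_q+\langle z_{i-1}\rangle\subseteq P$. Now primality gives $y_i\in P$ or $x_i\in P$. Your part~(2) escapes this issue only because $A_1$ is commutative, so element-level primality holds; you should flag that distinction explicitly rather than presenting the two arguments as parallel.
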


\begin{proof}
(1)  If
$y_i\in P\cap\mathcal{M}_n$ or $x_i\in P\cap\mathcal{M}_n$ then $z_i, z_{i-1}\in P\cap\mathcal{M}_n$ by Lemma~\ref{BR}(1). Conversely, if $z_i, z_{i-1}\in P\cap\mathcal{M}_n$
then $y_ix_i=z_i-z_{i-1}\in P$. Since $y_i$ and $x_i$ are normal modulo  $\langle z_{i-1}\rangle$, $y_i\in P\cap\mathcal{M}_n$ or $x_i\in P\cap\mathcal{M}_n$.

(2) Similar to (1).
\end{proof}

\begin{cor}\label{DIS}
For each admissible set $T$, set
$$\begin{aligned}
\spec_T A_{q}&=\{P\in\spec A_q|P\cap\mathcal{M}_n=T\},\\
 \Pspec_T A_1&=\{P\in\Pspec A_1|P\cap\mathcal{M}_n=T\}.
 \end{aligned}
$$
Then
 $$\spec A_{q}=\biguplus_T\spec_T A_{q},\ \ \ \Pspec A_1=\biguplus_T\Pspec_T A_1.$$
\end{cor}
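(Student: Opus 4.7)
The plan is to observe that Corollary~\ref{DIS} is essentially a bookkeeping consequence of Lemma~\ref{AP}, requiring only two things: (i) that every prime (respectively Poisson prime) ideal sits in some piece $\spec_T A_q$ (respectively $\Pspec_T A_1$), and (ii) that the pieces are pairwise disjoint. Both are formal, and the real content already lies in Lemma~\ref{AP} together with the normality properties recorded in Lemma~\ref{BR}.

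First I would handle the covering. Given $P \in \spec A_q$, set $T := P \cap \mathcal{M}_n$. By Lemma~\ref{AP}(1), $T$ is an admissible subset of $\mathcal{M}_n$, and by definition $P \in \spec_T A_q$. Hence $\spec A_q = \bigcup_T \spec_T A_q$, where the union runs over admissible sets. The Poisson version is identical, using Lemma~\ref{AP}(2) in place of (1).

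Next, disjointness: if $T \neq T'$ are two admissible subsets of $\mathcal{M}_n$, then any $P \in \spec_T A_q$ satisfies $P \cap \mathcal{M}_n = T$, while any $P' \in \spec_{T'} A_q$ satisfies $P' \cap \mathcal{M}_n = T'$, so $P \neq P'$ and the two pieces are disjoint. The same argument applies verbatim on the Poisson side. Combining covering and disjointness yields the two asserted disjoint-union decompositions.

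There is no genuine obstacle here; all the work has been done in Lemma~\ref{AP}, whose proof in turn relied on the fact that each $z_i$ is central-up-to-a-scalar and that $y_i, x_i$ are normal modulo $\langle z_{i-1}\rangle$ (Lemma~\ref{BR}). The only thing to be slightly careful about is that the indexing set of admissible subsets is used purely as a labelling device, so one should mention explicitly that the union is indexed over all admissible $T \subseteq \mathcal{M}_n$ in order to make the $\biguplus$ notation unambiguous.
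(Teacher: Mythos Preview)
Your proposal is correct and matches the paper's own proof, which simply reads ``It is clear by Lemma~\ref{AP}.'' You have merely unpacked this one-line justification into its obvious covering and disjointness components, which is entirely appropriate.
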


\begin{proof}
It is clear by Lemma~\ref{AP}
\end{proof}

\begin{rem}\label{MR}
 Recall the following concepts in \cite[4.2]{AkJo}. In $R_n^{Q,\Lambda}$, set $z_i=1+\sum_{k=1}^i(q_k-1)y_kx_k$ for each $1\leq i\leq n$. For a subset $U$ of $\{z_1,y_1,x_1,z_2,y_2,x_2,\ldots, z_n,y_n,x_n\}\subseteq R_n^{Q,\Lambda}$ and an integer
$i$ with $1\leq i\leq n$, we say that $i$ features in $U$ if at least one of $z_i,y_i,x_i$ is in $U$. By $p$-sequence, we mean a subset $U$ of $\{z_1,y_1,x_1,\ldots, z_n,y_n,x_n\}$ with the following properties:

(i)  if $z_i\in U$ then $i-1$ does not feature in $U$;

(ii) if $y_i\in U$ or $x_i\in U$ then $i-1$ does feature in $U$.

Let $U$ be a $p$-sequence such that $y_1,x_1\notin U$ and set $U'=U\cup\{z_i|\text{$i$ features in $U$}\}$. Then the homomorphic image $\psi(U')$, where $\psi$ is the isomorphism  (\ref{iso}), consists of elements obtained by multiplying  nonzero scalars to all elements in an admissible set $T$. Thus the ideal generated by $\psi(U)$ is equal to the ideal $\langle T\rangle$. Conversely, for any admissible set $T$ of $A_{q}$,  $\langle T\rangle$ is equal to an ideal generated by
$\psi(U)$ for some $p$-sequence $U$  such that $y_1,x_1\notin U$.
\end{rem}

\begin{lem}
Let $G(Q,\Lambda)$ be  torsion free and let  $T$ be an admissible set.

(1) The ideal $\langle T\rangle$ of $A_{q}$ is a completely prime ideal.

(2) The ideal $\langle T\rangle$ of $A_1$ is a  Poisson prime ideal.
\end{lem}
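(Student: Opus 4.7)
The plan is to show that $A_{q}/\langle T\rangle$ is a domain (resp.\ $A_1/\langle T\rangle$ is a Poisson domain), which immediately gives complete (resp.\ Poisson) primeness. For (1) one can appeal, via Remark~\ref{MR} and Lemma~\ref{IS}, to the corresponding theorem of \cite{AkJo} for $R_n^{Q,\Lambda}$; below I sketch a direct inductive proof that also handles (2).

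Induct on $n$. Let $B$ denote the subalgebra of $A_{q}$ generated by $y_1,x_1,\ldots,y_{n-1},x_{n-1}$, and set $T_0=T\cap\mathcal{M}_{n-1}$, which is admissible in $\mathcal{M}_{n-1}$. By induction $\bar B:=B/\langle T_0\rangle$ is a domain. The iterated skew polynomial structure of Lemma~\ref{SKP} yields
$$A_{q}/\langle T_0\rangle\ \cong\ C:=\bar B[y_n;\bar\alpha^n][x_n;\bar\beta^n,\bar\delta^n].$$
If $z_n\notin T$ then admissibility forces $y_n,x_n\notin T$, so $T=T_0$ and $A_{q}/\langle T\rangle\cong C$ is a domain. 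If $z_n,z_{n-1}\in T$ then at least one of $y_n,x_n$ is in $T$; since $z_{n-1}\in T_0$ absorbs both $z_n=z_{n-1}+y_nx_n$ and the derivation value $\delta^n(y_n)=(\tilde q_n-1)z_{n-1}$, the quotient is a skew polynomial extension of $\bar B$ by the remaining (zero or one) generator among $\{y_n,x_n\}$, hence again a domain.

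The main obstacle is the remaining case $z_n\in T$ but $z_{n-1}\notin T$, where $T=T_0\cup\{z_n\}$ with $y_n,x_n\notin T$; I must show $C/\langle w\rangle$ is a domain for $w=y_nx_n+\bar z_{n-1}$. The image of $z_n$ in $C$ is normal by Lemma~\ref{BR}(1), so $\langle w\rangle=Cw=wC$. The element $y_n$ is normal and regular in the domain $C$; localizing at the Ore set it generates turns $w=0$ into $x_n=-y_n^{-1}\bar z_{n-1}$, whence
$$C[y_n^{-1}]/\langle w\rangle\ \cong\ \bar B[y_n,y_n^{-1};\bar\alpha^n],$$
a skew Laurent polynomial ring over the domain $\bar B$, and hence a domain. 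Verifying that $y_n$ is a nonzerodivisor modulo $\langle w\rangle$ (from the normality of $w$ together with $C$ being a domain) produces an embedding $C/\langle w\rangle\hookrightarrow C[y_n^{-1}]/\langle w\rangle$, completing the proof of (1).

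For (2), the identical induction and case analysis apply with iterated Poisson polynomial extensions in place of Ore extensions, using Theorem~\ref{ZZX}(3) and Lemma~\ref{BR}(2) for the Poisson brackets and the Poisson-normality of the $z_i$. One additionally verifies that $\langle T\rangle$ is a Poisson ideal of $A_1$, which is exactly what admissibility enforces: the bracket $\{x_i,y_i\}=\tilde q_i'(1)z_i$ requires $z_i\in T$ whenever $y_i$ or $x_i$ lies in $T$, while $\{z_i,A_1\}\subseteq\langle z_i\rangle$ is automatic from Lemma~\ref{BR}(2). Combined with the Poisson-domain property of the quotient, this yields Poisson primeness.
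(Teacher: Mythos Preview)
Your argument is correct and follows the same inductive skeleton as the paper's proof of (2): induct on $n$, set $T_0=T\cap\mathcal{M}_{n-1}$, and split cases according to which of $z_n,y_n,x_n$ lie in $T$. The differences are in the level of detail. For (1) the paper simply invokes \cite[4.5]{AkJo} via Remark~\ref{MR}, whereas you give the direct skew-polynomial proof. For (2) the paper's inductive step is a one-line appeal to Poisson normality of $z_n$ (and of $y_n,x_n$ modulo $\langle z_n\rangle$), leaving the primeness of the quotient essentially implicit; you instead supply the nontrivial domain check in the case $T=T_0\cup\{z_n\}$ by localizing at $y_n$ and identifying $C[y_n^{-1}]/\langle w\rangle$ with a skew Laurent extension of $\bar B$. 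Your route is more self-contained; the paper's buys brevity by outsourcing (1) to \cite{AkJo} and treating the commutative case (2) as structurally parallel.

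One small point to tighten: the parenthetical ``from the normality of $w$ together with $C$ being a domain'' is not quite enough to conclude that $y_n$ is regular modulo $\langle w\rangle$. You also need that $y_nC$ is completely prime (which holds since $C/y_nC\cong\bar B[x_n;\bar\beta^n|_{\bar B}]$ is a domain) and that $w\equiv\bar z_{n-1}\not\equiv0$ there. Granting this, if $y_nf=wg$ then complete primeness forces $g=y_ng'$, and $y_nf=wy_ng'=q_ny_nwg'$ (from $y_nz_n=q_n^{-1}z_ny_n$ in Lemma~\ref{BR}(1)) gives $f=q_nwg'\in wC$, as you need.
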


\begin{proof}
(1) It follows immediately by Remark~\ref{MR} and  \cite[4.5]{AkJo}.

(2)
We proceed by induction on $n$. If $n=1$ then the admissible sets of $\mathcal{M}_1$ are only $\varnothing$ and $\{z_1\}$.
If $T=\varnothing$ then $\langle T\rangle=0$ and thus the result is  clear. Let  $T=\{z_1\}$. Since  $z_1$ is a Poisson normal element,  $\langle T\rangle$
is Poisson prime.

Suppose that $n>1$ and that the result is true for the cases less than $n$. Set $U=T\cap\mathcal{M}_{n-1}$. Then $U$ is an admissible set of $\mathcal{M}_{n-1}$. There are following five cases:
$$T=U, T=U\cup\{z_n\}, T=U\cup\{z_n, y_n\}, T=U\cup\{z_n, x_n\}, T=U\cup\{z_n, y_n,x_n\}.$$
If $T=U$ then the result is clear by the induction hypothesis. For the other cases, $\langle T\rangle$ is Poisson prime since $z_n$ is Poisson normal and
$y_n$ and $x_n$ are Poisson normal modulo $\langle z_n\rangle$.
\end{proof}

\begin{notn}
Let $T$ be an admissible set of $\mathcal{M}_n$.
 Define a subset $Y_T$ of $\mathcal{M}_n\cup\{y_1\}$ by
$$Y_T\cap\{z_i,y_i,x_i\}=\left\{\begin{aligned}
&\{z_i,y_i\}& &\text{if }z_i\notin T,\\
&\{y_i\}&&\text{if }z_i\in T,y_i\notin T,\\
&\{x_i\}&&\text{if }z_i\in T,y_i\in T, x_i\notin T,\\
&\varnothing&&\text{if }z_i\in T,y_i\in T, x_i\in T\end{aligned}\right.$$
for each $i=1,\ldots,n$. Note that there is no $i$ such that $y_i\in Y_T$ and $x_i\in Y_T$.
\end{notn}

Recall the  coordinate ring of  quantum torus: Let $C=(c_{ij})$ be a multiplicative antisymmetric $n\times n$-matrix over $\Bbb C$.
The  coordinate ring  of quantum torus, denoted by  $\mathcal{T}_C$,  is the $\Bbb C$-algebra generated by $z_1^{\pm1},\ldots, z_n^{\pm1}$ subject to the relations
$$z_iz_j=c_{ij}z_jz_i$$ for all $1\leq i,j\leq n$.

The proof of the following lemma is modified from that of \cite[4.9]{AkJo} by using Poisson terminologies.

\begin{lem}\label{TOR}
Let $T$ be an admissible set of $\mathcal{M}_n$ and set
$Y_T=\{w_1,\ldots, w_s\}.$  Denote by $\mathcal{Y}_T$  the multiplicative closed set of $A_{q}/\langle T\rangle$ and $A_1/\langle T\rangle$ generated by the canonical image of $Y_T$.

(1) Let $\Bbb C[w_1,\ldots, w_s]$ be the subalgebra of $A_{q}$ generated by $w_1,\ldots, w_s$. Then, for all $1\leq i,j\leq s$,
$$w_iw_j=c_{ij}w_jw_i$$ for some $c_{ij}\in G(Q,\Lambda)$ and  $(A_{q}/\langle T\rangle)[\mathcal{Y}_T^{-1}]$ is isomorphic to the  localization $\Bbb C[w_1^{\pm1},\ldots, w_s^{\pm1}]$ of $\Bbb C[w_1,\ldots, w_s]$
at the multiplicative closed set generated by $w_1,\ldots, w_s$, which is the  coordinate ring  of quantum torus $\mathcal{T}_{C_T}$, where ${C_T}=(c_{ij})$ a multiplicative anti-symmetric $s\times s$-matrix.

(2) Let $\Bbb C[w_1,\ldots, w_s]$ be the Poisson subalgebra of $A_1$ generated by $w_1,\ldots, w_s$. Then, for all $1\leq i,j\leq s$,
$$\{w_i,w_j\}=\widetilde{c}_{ij}'(1)w_iw_j,$$
  where $c_{ij}$ is the element of $G(Q,\Lambda)$ in (1), and $(A_1/\langle T\rangle)[\mathcal{Y}_T^{-1}]$ is Poisson isomorphic to the  localization $\Bbb C[w_1^{\pm1},\ldots, w_s^{\pm1}]$ of $\Bbb C[w_1,\ldots, w_s]$
at the multiplicative closed set generated by $w_1,\ldots, w_s$.
\end{lem}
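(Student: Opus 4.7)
The plan is to handle both parts uniformly by lifting the needed computations to the skew polynomial algebra $A$ and then specializing at $t=q$ for part (1) or passing to the semiclassical limit at $t=1$ for part (2). The main ingredients are the iterated skew polynomial structure of $A$ and $A_q$ (Lemma~\ref{SKP} and Proposition~\ref{ZZW}), the normality formulas for the $z_i$'s in Lemma~\ref{BR}, the Poisson data of Theorem~\ref{ZZX}, and the classification result of Akhavizadegan--Jordan recalled in Remark~\ref{MR}.

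For part (1), I would first verify the relations $w_iw_j = c_{ij}w_jw_i$ (with $c_{ij}\in G(Q,\Lambda)$) directly in $A_q$ by case analysis on the three types of generators ($z_k$, $y_k$, $x_k$) in each pair. The relations between $y$'s and $x$'s at different indices come from Proposition~\ref{ZZW}(2); those involving a $z_k$ come from Lemma~\ref{BR}(1). Crucially, the definition of $Y_T$ prevents $y_k$ and $x_k$ from appearing simultaneously, so the only nonscalar commutator $x_ky_k-\widetilde{q}_k(q)y_kx_k=(\widetilde{q}_k(q)-1)z_{k-1}$ is never invoked, each resulting scalar lies in $G(Q,\Lambda)$, and antisymmetry $c_{ij}c_{ji}=1$ is automatic. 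Next, using $y_ix_i=z_i-z_{i-1}$ with $z_0=1$, I would show that modulo $\langle T\rangle$ and after inverting $\mathcal{Y}_T$ every generator $z_i,y_i,x_i$ of $A_q/\langle T\rangle$ is a Laurent monomial in the $w_k$: if $z_i\notin T$ then $z_i,y_i\in Y_T$ and $x_i = y_i^{-1}(z_i-z_{i-1})$; if $z_i\in T$ and $y_i\notin T$ then $x_i\equiv -y_i^{-1}z_{i-1}\pmod{\langle T\rangle}$; if $z_i,y_i\in T$ and $x_i\notin T$ then $y_i\equiv0$ and $x_i\in Y_T$; if $z_i,y_i,x_i\in T$ then all three vanish. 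This produces a surjective $\Bbb C$-algebra homomorphism $\mathcal{T}_{C_T}\twoheadrightarrow(A_q/\langle T\rangle)[\mathcal{Y}_T^{-1}]$, and injectivity is the content of \cite[4.9]{AkJo} transferred through the isomorphism $\psi$ of Lemma~\ref{IS}, whose proof rests on the PBW basis of Proposition~\ref{ZZW}(1) and the normality of the elements of $\mathcal{Y}_T$.

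For part (2), I would lift $w_iw_j = c_{ij}w_jw_i$ from $A_q$ to the identity $w_iw_j = \widetilde{c}_{ij}w_jw_i$ in $A$ via the monomorphism (\ref{WB}). Since $\widetilde{c}_{ij}(1)=1$, the definition of the semiclassical bracket yields
\begin{equation*}
\{w_i,w_j\} = \overline{(t-1)^{-1}(w_iw_j-w_jw_i)} = \overline{(t-1)^{-1}(\widetilde{c}_{ij}-1)}\,w_jw_i = \widetilde{c}_{ij}'(1)\,w_iw_j,
\end{equation*}
using the commutativity of $A_1$. The generation argument of part (1) then transports verbatim (Poisson normality from Lemma~\ref{BR}(2) playing the role of normality), giving a surjective Poisson homomorphism from $\Bbb C[w_1^{\pm1},\ldots,w_s^{\pm1}]$, equipped with the bracket just exhibited, onto $(A_1/\langle T\rangle)[\mathcal{Y}_T^{-1}]$. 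Injectivity is immediate in this case because by Theorem~\ref{ZZX}(3) the algebra $A_1$ is the ordinary commutative polynomial ring $\Bbb C[y_1,x_1,\ldots,y_n,x_n]$, so the argument reduces to a standard statement about the coordinate ring of an affine torus.

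The principal obstacle is the injectivity step in part (1), which requires controlling the Laurent polynomial algebra inside the localized quantum quotient. This is precisely the bookkeeping carried out in \cite[4.9]{AkJo} for $R_n^{Q,\Lambda}$; the subtlety for me is to verify that the correspondence between $Y_T$ and the $p$-sequences in Remark~\ref{MR} really does match the image of $\psi$, so that the cited result can be invoked verbatim, and to confirm that the $c_{ij}$ obtained from the case analysis are indeed all captured by elements of $G(Q,\Lambda)$ uniformly across the pair types involving some $z_k$.
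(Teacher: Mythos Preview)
Your argument for part~(1) is essentially the paper's: both verify the commutation relations $w_iw_j=c_{ij}w_jw_i$ from (\ref{WG}) and Lemma~\ref{BR}(1), observe that $y_k,x_k$ never occur together in $Y_T$, and then defer the localization isomorphism to \cite[4.8, 4.9]{AkJo} via Remark~\ref{MR}.

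For part~(2) your route genuinely differs from the paper's. The paper establishes $\{w_i,w_j\}=\widetilde{c}_{ij}'(1)w_iw_j$ by an induction on $n$ through the five possible extensions $T=W$, $W\cup\{z_n\}$, $W\cup\{z_n,y_n\}$, $W\cup\{z_n,x_n\}$, $W\cup\{z_n,y_n,x_n\}$, writing down in each case the commutation matrix in $A_q$ and the corresponding Poisson matrix in $A_1$, and checking entry by entry that the latter is the formal derivative at $t=1$ of the tilde of the former. Your approach bypasses this bookkeeping: you observe that the very same case analysis giving $w_iw_j=c_{ij}w_jw_i$ in $A_q$ (from (\ref{WG}) and Lemma~\ref{BR}(1)) gives $w_iw_j=\widetilde{c}_{ij}w_jw_i$ in $A$ itself (from (\ref{WE}) and the identical $z_i$-normality computation in $A$), since $\widetilde{\ \ }$ is a group homomorphism, and then the semiclassical bracket formula of Notation~\ref{ASSUM}(4) yields the Poisson bracket in one line. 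This is shorter and more conceptual; the only thing you should make explicit is that Lemma~\ref{BR}(1) holds verbatim in $A$ with each $q_j$ replaced by $\widetilde{q}_j$, since the paper only states it for $A_q$ and $A_1$. That verification is routine (same proof), and once you have it your argument is complete. The paper's inductive approach, by contrast, never leaves $A_q$ and $A_1$, at the cost of a lengthy case tabulation.
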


\begin{proof}
(1) There is no $i$ such that $y_i\in Y_T$ and $x_i\in Y_T$. Hence, for all $1\leq i,j\leq s$, $w_iw_j=c_{ij}w_jw_i$ for a suitable element $c_{ij}\in G(Q,\Lambda)$ by (\ref{WG}) and Lemma~\ref{BR}(1). The remaining result is proved by \cite[4.8 and the proof of 4.9]{AkJo} since  $Y_T$ consists of the elements obtained by multiplying nonzero scalars to the elements of $\psi(D(U))$ for some $p$-sequence $U$ by Remark~\ref{MR}, where $\psi$ is the isomorphism  (\ref{iso}) and $D(U)$ is the subset of $R_n^{Q,\Lambda}$ in \cite[4.8]{AkJo}.

(2) Note that  the canonical map from the subalgebra of $A_1$ generated by $Y_T$ to $A_1/\langle T\rangle$ is injective and that, for $i$ with $z_i\notin T$, $y_i$ and $z_i$ are in $Y_T$
and $\overline{x}_i=\overline{y}_i^{^{-1}}\overline{(z_i-z_{i-1})}$  in  $(A_1/\langle T\rangle)[\mathcal{Y}_T^{-1}]$. Hence $(A_1/\langle T\rangle)[\mathcal{Y}_T^{-1}]$ is the Poisson algebra $\Bbb C[w_1^{\pm1},\ldots, w_s^{\pm1}]$.

It remains to  prove the Poisson bracket $\{w_i,w_j\}=\widetilde{c}_{ij}'(1)w_iw_j$ for all $1\leq i,j\leq s$. We proceed by induction on $n$. If $n=1$ then
$Y_T=\{z_1,y_1\}$ or $Y_T=\{y_1\}$.
If $Y_T=\{z_1,y_1\}$ then $z_1y_1=q_1y_1z_1$ in $A_{q}$ and $\{z_1,y_1\}=\widetilde{q}_1'(1)z_1y_1$ in $A_1$ by Lemma~\ref{BR}. If $Y_T=\{y_1\}$ then $y_1y_1=y_1y_1$ in $A_{q}$ and $\{y_1,y_1\}=0$ in $A_1$. Hence our claim is true for $n=1$.

Suppose that $n>1$ and that our claim is true  for the cases less than $n$. Let $W=T\cap\mathcal{M}_{n-1}$. Then $W$ is an admissible set of $\mathcal{M}_{n-1}$. Set $Y_W=\{w_1,\ldots, w_p\}$. Let $\Delta$ be the multiplicative anti-symmetric $p\times p$-matrix $(c_{ij})$ which is determined by the commutation relations $w_iw_j=c_{ij}w_jw_i$ for all $1\leq i,j\leq p$ and let $\Delta'$ be the skew symmetric $p\times p$-matrix $(d_{ij})$ which is determined by the Poisson brackets $\{w_i,w_j\}=d_{ij}w_iw_j$ for all $1\leq i,j\leq p$. By  induction hypothesis, $d_{ij}=\widetilde{c}_{ij}'(1)$ for all $1\leq i,j\leq p$.
 There are the following five cases:
$$T=W, T=W\cup\{z_n\}, T=W\cup\{z_n, y_n\}, T=W\cup\{z_n, x_n\}, T=W\cup\{z_n, y_n, x_n\}.$$

If $T=W$ then $Y_T=Y_W\cup\{z_n,y_n\}=\{w_1,\ldots, w_p,z_n,y_n\}$. In $A_{q}$, the commutation relations of each pair of elements in $Y_T$ are determined by the  multiplicative anti-symmetric matrix:
$$\left(\begin{matrix}
\Delta&U\\U^*&V
\end{matrix}\right),$$
where  $U$ is the $p\times 2$-matrix determined by the commutation relations between $\{w_1,\ldots, w_p\}$ and $\{z_n,y_n\}$,
each $(k,\ell)$-entry of $U^*$ is the inverse of the $(\ell,k)$-entry of $U$ and $V$ is the multiplicative anti-symmetric matrix $\left(\begin{matrix}1&q_n\\q_n^{-1}&1\end{matrix}\right)$ determined by the commutation relations of $\{z_n, y_n\}$. In $A_1$, the Poisson brackets of each pair of elements in $Y_T$ are determined by the  skew symmetric matrix:
$$\left(\begin{matrix}
\Delta'&U'\\-(U')^t&V'
\end{matrix}\right),$$
where  $U'$ is the $p\times 2$-matrix determined by the Poisson brackets between $\{w_1,\ldots, w_p\}$ and $\{z_n,y_n\}$,
$-(U')^t$ is the transpose of $U'$ and $V'$ is the skew symmetric matrix $\left(\begin{matrix}0&\widetilde{q}_n'(1)\\-\widetilde{q}_n'(1)&0\end{matrix}\right)$
determined by  $\{z_n, y_n\}$.
  Observe that each entry of $V'$ is obtained from the corresponding entry of $V$ by derivation at $t=1$. Let us check that each $(i,j)$-entry of $U'$ is obtained from the $(i,j)$-entry of $U$ by derivation at $t=1$. Let $U_i$ and $U'_i$ be the $i$th-rows of $U$ and $U'$, respectively,  for $1\leq i\leq p$. If $w_1=z_1$ then $U_1=(1,1)$ and $U'_1=(0,0)$.  If $w_1=y_1$ then  $U_1=(q_1^{-1},\lambda_{1n})$ and $U'_1=(-\widetilde{q_1}'(1),\widetilde{\lambda}_{1n}'(1))$. Hence our claim is true for $U_1$ and $U'_1$. Suppose that, for $i=1,\ldots, p-1$, each entry of  $U'_i$ is obtained from the corresponding entry of $U_i$ by derivation at 1.
If $w_p=z_k$ for some $k<n$ then
$$U_p=(1, 1),\ \ \ U'_p=(0, 0).$$
If $w_p=y_k$ for some $k<n$ then
$$U_p=(q_k^{-1}, \lambda_{kn}),\ \ \ U'_p=(-\widetilde{q}_k'(1), \widetilde{\lambda}_{kn}'(1)).$$
If $w_p=x_k$ for some $k<n$ then
$$U_p=(q_k, \lambda_{kn}^{-1}),\ \ \ U'_p=(\widetilde{q}_k'(1), -\widetilde{\lambda}_{kn}'(1)).$$
Hence each $(i,j)$-entry of $U'$ is obtained from the $(i,j)$-entry of $U$ by derivation at $t=1$.

If $T=W\cup\{z_n\}$ then $Y_T=Y_W\cup\{y_n\}=\{w_1,\ldots, w_p,y_n\}$. In $A_{q}$, the commutation relations of each pair of elements in $Y_T$ are determined by the  multiplicative anti-symmetric matrix:
$$\left(\begin{matrix}
\Delta&U\\U^*&1
\end{matrix}\right),$$
where  $U$ is the $p\times 1$-matrix determined by the commutation relations between $\{w_1,\ldots, w_p\}$ and $\{y_n\}$ and each $(1,\ell)$-entry of $U^*$ is the inverse of the $(\ell,1)$-entry of $U$.
In $A_1$, the Poisson brackets of each pair of elements in $Y_T$ are determined by the  skew symmetric matrix:
$$\left(\begin{matrix}
\Delta'&U'\\-(U')^t&0
\end{matrix}\right),$$
where  $U'$ is the $p\times 1$-matrix determined by the Poisson brackets between $\{w_1,\ldots, w_p\}$ and $\{y_n\}$ and
$-(U')^t$ is the transpose of $U'$.  Let $c_\ell$ and $d_\ell$ be $(\ell,1)$-entries of $U$ and $U'$, respectively. If $w_\ell=z_k$ for some $k<n$ then
$c_\ell=1$, $d_\ell=0$.  If $w_\ell=y_k$ for some $k<n$ then
$c_\ell=\lambda_{kn}$, $d_\ell=\widetilde{\lambda}_{kn}'(1)$.  If $w_\ell=x_k$ for some $k<n$ then
$c_\ell=\lambda_{kn}^{-1}$, $d_\ell=-\widetilde{\lambda}_{kn}'(1)$. Hence our claim is true.

If $T=W\cup\{z_n,y_n\}$ then $Y_T=Y_W\cup\{x_n\}=\{w_1,\ldots, w_p,x_n\}$. In $A_{q}$, the commutation relations of each pair of elements in $Y_T$ are determined by the  multiplicative anti-symmetric matrix:
$$\left(\begin{matrix}
\Delta&U\\U^*&1
\end{matrix}\right),$$
where  $U$ is the $p\times 1$-matrix determined by the commutation relations between $\{w_1,\ldots, w_p\}$ and $\{x_n\}$ and each $(1,\ell)$-entry of $U^*$ is the inverse of the $(\ell,1)$-entry of $U$.
In $A_1$, the Poisson brackets of each pair of elements in $Y_T$ are determined by the  skew symmetric matrix:
$$\left(\begin{matrix}
\Delta'&U'\\-(U')^t&0
\end{matrix}\right),$$
where  $U'$ is $p\times 1$-matrix determined by the Poisson brackets between $\{w_1,\ldots, w_p\}$ and $\{x_n\}$ and
$-(U')^t$ is the transpose of $U'$.  Let $c_\ell$ and $d_\ell$ be $(\ell,1)$-entries of $U$ and $U'$, respectively. If $w_\ell=z_k$ for some $k<n$ then
$c_\ell=1$, $d_\ell=0$.  If $w_\ell=y_k$ for some $k<n$ then
$c_\ell=q_k^{-1}\lambda_{kn}^{-1}$, $d_\ell=-(\widetilde{q}_k'(1)+\widetilde{\lambda}_{kn}'(1))$.  If $w_\ell=x_k$ for some $k<n$ then
$c_\ell=q_k\lambda_{kn}$, $d_\ell=\widetilde{q}_k'(1)+\widetilde{\lambda}_{kn}'(1)$.

If $T=W\cup\{z_n,x_n\}$ then $Y_T=Y_W\cup\{y_n\}=\{w_1,\ldots, w_p,y_n\}$. This case was proved in the case $T=W\cup\{z_n\}$.

If $T=W\cup\{z_n,y_n,x_n\}$ then $Y_T=Y_W$ and thus there is nothing to prove. This completes the proof.
\end{proof}

\begin{lem}\label{CENT}
 Retain the notation of Lemma~\ref{TOR}.
Let $$c_{\ell i}=\eta_1^{v_{\ell i}^1}\cdots \eta_r^{v_{\ell i}^r}=e_1(q)^{v_{\ell i}^1}\cdots e_r(q)^{v_{\ell i}^r}$$ for all $1\leq\ell, i\leq s$.

(1) The center $Z((A_q/\langle T\rangle)[\mathcal{Y}_T^{-1}])$ is  the $\Bbb C$-subalgebra generated by monomials $w_1^{u_1}\cdots w_s^{u_s}$ such that $u_1v_{\ell 1}^i+\ldots +u_sv_{\ell s}^i=0$ for all $1\leq i\leq r$.
In particular, $Z((A_q/\langle T\rangle)[\mathcal{Y}_T^{-1}])$ is independent to $q$.

(2) The Poisson center $ZP((A_1/\langle T\rangle)[\mathcal{Y}_T^{-1}])$ is  the $\Bbb C$-subalgebra generated by monomials $w_1^{u_1}\cdots w_s^{u_s}$ such that $u_1v_{\ell 1}^i+\ldots +u_sv_{\ell s}^i=0$ for all $1\leq i\leq r$.
\end{lem}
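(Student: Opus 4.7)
The plan is to compute, in each algebra, the commutator (resp.\ Poisson bracket) of a generator $w_\ell$ with an arbitrary Laurent monomial, then use linear independence of monomials to reduce a general central element to a sum of central monomials, and finally translate the vanishing condition into the stated integer linear system via the freeness of $\{\eta_1,\dots,\eta_r\}$ (in (1)) or the $\Bbb Q$-linear independence of $\{\mu_1,\dots,\mu_r\}$ (in (2)). Both parts share the same skeleton, so I would write the computation once for the monomial basis and then specialize.

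For part (1), since $(A_q/\langle T\rangle)[\mathcal{Y}_T^{-1}]\cong \mathcal{T}_{C_T}=\Bbb C[w_1^{\pm 1},\dots,w_s^{\pm 1}]$ by Lemma~\ref{TOR}(1), every element is a finite $\Bbb C$-linear combination of monomials $w^{\bf u}=w_1^{u_1}\cdots w_s^{u_s}$, ${\bf u}\in\Bbb Z^s$. A direct induction on the exponents, using $w_\ell w_i=c_{\ell i}w_iw_\ell$, gives
$$w_\ell w^{\bf u}=\Bigl(\prod_{i=1}^{s}c_{\ell i}^{u_i}\Bigr)w^{\bf u}w_\ell.$$
If $f=\sum_{\bf u}a_{\bf u}w^{\bf u}$ is central, then $w_\ell f=f w_\ell$ forces $\prod_i c_{\ell i}^{u_i}=1$ for every ${\bf u}$ with $a_{\bf u}\neq 0$ and every $\ell$, because the monomials are $\Bbb C$-linearly independent. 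Substituting $c_{\ell i}=\eta_1^{v_{\ell i}^1}\cdots\eta_r^{v_{\ell i}^r}$ gives $\prod_j\eta_j^{\sum_i u_iv_{\ell i}^j}=1$; since $\{\eta_1,\dots,\eta_r\}$ is a $\Bbb Z$-basis of the free abelian group $G(Q,\Lambda)$, this is equivalent to $\sum_{i=1}^{s}u_iv_{\ell i}^j=0$ for all $1\le j\le r$ and all $\ell$. This yields the asserted description and makes clear that the condition is independent of $q$.

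For part (2), the Poisson analogue of Lemma~\ref{TOR}(2) identifies $(A_1/\langle T\rangle)[\mathcal{Y}_T^{-1}]$ with the Poisson Laurent algebra $\Bbb C[w_1^{\pm 1},\dots,w_s^{\pm 1}]$ with brackets $\{w_i,w_j\}=\widetilde{c}_{ij}'(1)w_iw_j$. Expanding by Leibniz and using commutativity,
$$\{w_\ell,w^{\bf u}\}=\Bigl(\sum_{i=1}^{s}u_i\,\widetilde{c}_{\ell i}'(1)\Bigr)w_\ell w^{\bf u}.$$
From $\widetilde{c}_{\ell i}=e_1^{v_{\ell i}^1}\cdots e_r^{v_{\ell i}^r}$ together with $e_k(1)=1$ and $e_k'(1)=\mu_k$, the logarithmic-derivative rule gives $\widetilde{c}_{\ell i}'(1)=\sum_{j=1}^{r}v_{\ell i}^j\mu_j$, hence
$$\sum_{i=1}^{s}u_i\,\widetilde{c}_{\ell i}'(1)=\sum_{j=1}^{r}\mu_j\Bigl(\sum_{i=1}^{s}u_iv_{\ell i}^j\Bigr).$$
If $f=\sum_{\bf u}a_{\bf u}w^{\bf u}$ is Poisson central, then $\{w_\ell,f\}=0$ together with linear independence of monomials forces $\sum_{i}u_i\widetilde{c}_{\ell i}'(1)=0$ for every ${\bf u}$ in the support of $f$ and every $\ell$. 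The inner sums $\sum_i u_iv_{\ell i}^j$ lie in $\Bbb Z$, so the $\Bbb Q$-linear independence of $\{\mu_1,\dots,\mu_r\}$ forces $\sum_{i=1}^{s}u_iv_{\ell i}^j=0$ for all $1\le j\le r$ and all $\ell$, which is exactly the condition in (1).

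The only step that requires genuine care, rather than bookkeeping, is the passage from the single scalar vanishing $\sum_i u_i\widetilde{c}_{\ell i}'(1)=0$ to the system of integer equations; this is where the choice of the $\Bbb Q$-linearly independent tuple $\{\mu_j\}$ from the setup is used in an essential way. Apart from this, everything reduces to expanding one monomial bracket and exploiting that Laurent monomials are a $\Bbb C$-basis of a quantum (or Poisson) torus.
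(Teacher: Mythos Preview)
Your proposal is correct and follows essentially the same approach as the paper: reduce to monomials via the Laurent monomial basis, compute $w_\ell w^{\bf u}=(\prod_i c_{\ell i}^{u_i})w^{\bf u}w_\ell$ (resp.\ $\{w_\ell,w^{\bf u}\}=(\sum_i u_i\widetilde{c}_{\ell i}'(1))w_\ell w^{\bf u}$), and then invoke freeness of $\{\eta_j\}$ in (1) and $\Bbb Q$-linear independence of $\{\mu_j\}$ in (2) to extract the integer linear system. The paper's proof is organized identically, including the logarithmic-derivative computation $\widetilde{c}_{\ell i}'(1)=\sum_j v_{\ell i}^j\mu_j$.
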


\begin{proof}
For $u=(u_1,\ldots, u_s)\in\Bbb Z^s$, set $w^u=w_1^{u_1}\cdots w_s^{u_s}$.

(1) For $f=\sum_u \alpha_u w^u\in (A_q/\langle T\rangle)[\mathcal{Y}_T^{-1}]$,
$f\in Z((A_q/\langle T\rangle)[\mathcal{Y}_T^{-1}])$ if and only if $w_\ell fw_\ell^{-1}=f$ for all $\ell$ if and only if $w^u\in Z((A_q/\langle T\rangle)[\mathcal{Y}_T^{-1}])$ for all $u$ such that $\alpha_u\neq0$, and
$$\begin{aligned}
w^u\in Z((A_q/\langle T\rangle)[\mathcal{Y}_T^{-1}])&\Leftrightarrow w_\ell w^uw_\ell^{-1}=w^u &&\text{for all }\ell=1,\ldots, s \\
&\Leftrightarrow c_{\ell 1}^{u_1}\cdots c_{\ell s}^{u_s}=1 &&\text{for all }\ell=1,\ldots,s\\
&\Leftrightarrow \prod_{i=1}^r\eta_i^{u_1v_{\ell 1}^i+\ldots +u_sv_{\ell s}^i}=1 &&\text{for all }\ell=1,\ldots,s\\
&\Leftrightarrow u_1v_{\ell 1}^i+\ldots +u_sv_{\ell s}^i=0 &&\text{for all }\ell\text{ and }  i.
\end{aligned}$$
Thus $Z((A_q/\langle T\rangle)[\mathcal{Y}_T^{-1}])$ is the $\Bbb C$-algebra spanned by all  monomials $w^u$ such that $$u_1v_{\ell 1}^i+\ldots +u_sv_{\ell s}^i=0$$ for all $\ell=1,\ldots,s$ and $i=1,\ldots,r$. That is,
$$Z((A_q/\langle T\rangle)[\mathcal{Y}_T^{-1}])=\left\{\sum_u\Bbb C w^u| u_1v_{\ell 1}^i+\ldots +u_sv_{\ell s}^i=0\ \text{ for all $1\leq\ell\leq s$,  $1\leq i\leq r$}\right\}.$$
In particular, $Z((A_q/\langle T\rangle)[\mathcal{Y}_T^{-1}])$ is independent to $q$.

(2) Note, by (\ref{WC}),  that $$\widetilde{c}_{\ell i}'(1)=v_{\ell i}^1\mu_1+\ldots+v_{\ell i}^r\mu_r.$$
For $f=\sum_u \alpha_u w^u\in (A_1/\langle T\rangle)[\mathcal{Y}_T^{-1}]$,
$f\in ZP((A_1/\langle T\rangle)[\mathcal{Y}_T^{-1}])$ if and only if $\{w_\ell, f\}w_\ell^{-1}=0$ for all $\ell$ if and only if $w^u\in ZP((A_1/\langle T\rangle)[\mathcal{Y}_T^{-1}])$ for all $u$ such that $\alpha_u\neq0$, and
 $$\begin{aligned}
w^u\in ZP((A_1/\langle T\rangle)[\mathcal{Y}_T^{-1}])&\Leftrightarrow \{w_\ell, w^u\}=0 &&\text{for all }\ell=1,\ldots,s \\
&\Leftrightarrow u_1\widetilde{c}_{\ell 1}'(1)+\ldots +u_s\widetilde{c}_{\ell s}'(1)=0 &&\text{for all }\ell=1,\ldots,s\\
&\Leftrightarrow\sum_{i=1}^r (u_1v_{\ell 1}^i+\ldots +u_sv_{\ell s}^i)\mu_i=0 &&\text{for all }\ell=1,\ldots,s\\
&\Leftrightarrow u_1v_{\ell 1}^i+\ldots +u_sv_{\ell s}^i=0 &&\text{for all }\ell\text{ and } i.
\end{aligned}$$
Hence
$$ZP((A_1/\langle T\rangle)[\mathcal{Y}_T^{-1}])=\left\{\sum_u\Bbb C w^u| u_1v_{\ell 1}^i+\ldots +u_sv_{\ell s}^i=0\ \text{ for all $1\leq\ell\leq s$,  $1\leq i\leq r$}\right\}.$$
 \end{proof}

\begin{lem}\label{CONT}
 Retain the notation of Lemma~\ref{TOR}.
Let $D$ be the $\Bbb C$-algebra $\Bbb C[w_1,\ldots, w_s]$ of Lemma~\ref{TOR}(1) and let $E$ be the Poisson algebra
$\Bbb C[w_1,\ldots, w_s]$ of Lemma~\ref{TOR}(2). Note that
$Z(D[w_1^{-1},\ldots, w_s^{-1}])=ZP(E[w_1^{-1},\ldots, w_s^{-1}])$ by Lemma~\ref{CENT}. Set
$$Z=Z(D[w_1^{-1},\ldots, w_s^{-1}])=ZP(E[w_1^{-1},\ldots, w_s^{-1}])$$ and
 $w^u=w_1^{u_1}\cdots w_s^{u_s}$ for $u=(u_1,\ldots, u_s)\in\Bbb Z^s$.
For any $f=\sum_u \alpha_u w^u\in Z$ and $v\in(\Bbb Z_+)^s$, $w^vf\in D$ if and only if $w^vf\in E$.
\end{lem}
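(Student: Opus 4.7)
The plan is to reduce the membership questions $w^v f \in D$ and $w^v f \in E$ to a common condition on the support of $f$, and then observe that this condition does not depend on which of the two algebras we work in. Write $f = \sum_u \alpha_u w^u$ as a finite sum with $u$ ranging over $\mathbb{Z}^s$.

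First I would pin down the multiplication rule in both settings. In the quantum torus $D[w_1^{-1},\ldots,w_s^{-1}]$, the commutation relations $w_iw_j = c_{ij}w_jw_i$ allow one to reorder $w^v\cdot w^u = w_1^{v_1}\cdots w_s^{v_s}\,w_1^{u_1}\cdots w_s^{u_s}$ by moving each $w_i^{u_i}$ past the appropriate $w_k^{v_k}$'s with $k<i$, picking up a product of powers of $c_{ki}$. The outcome is an identity of the form $w^v w^u = c_{v,u}\, w^{v+u}$ with $c_{v,u}\in G(Q,\Lambda)\subseteq \mathbb{C}^\times$. In the Laurent polynomial ring $E[w_1^{-1},\ldots,w_s^{-1}]$ one has trivially $w^v w^u = w^{v+u}$. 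Therefore
\[
w^v f \;=\; \sum_u \alpha_u c_{v,u}\, w^{v+u} \quad \text{in } D[w_1^{-1},\ldots,w_s^{-1}],
\]
while $w^v f = \sum_u \alpha_u\, w^{v+u}$ in $E[w_1^{-1},\ldots,w_s^{-1}]$.

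Next I would invoke the PBW-type basis. By Lemma~\ref{SKP} and Proposition~\ref{ZZW}(1), the ordered monomials $\{w^u : u \in (\mathbb{Z}_+)^s\}$ form a $\mathbb{C}$-basis of $D$, and they are obviously a $\mathbb{C}$-basis of $E$. Inverting the $w_i$, the Laurent monomials $\{w^u : u \in \mathbb{Z}^s\}$ are a $\mathbb{C}$-basis of both $D[w_1^{-1},\ldots,w_s^{-1}]$ and $E[w_1^{-1},\ldots,w_s^{-1}]$. Consequently, for an element expressed in this basis, membership in $D$ (respectively $E$) is equivalent to every exponent vector appearing with nonzero coefficient lying in $(\mathbb{Z}_+)^s$.

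Applied to $w^v f$, this reads: $w^v f \in D$ if and only if $v+u \in (\mathbb{Z}_+)^s$ for every $u$ with $\alpha_u \neq 0$ (the scalars $c_{v,u}$ are nonzero and thus do not affect which exponents appear), and $w^v f \in E$ if and only if the very same condition holds. The two conditions coincide, which proves the equivalence. The only point that requires a little care is verifying that the reordering in the quantum torus produces a \emph{nonzero} scalar multiple of $w^{v+u}$, so that the support of $w^vf$ in the noncommutative setting is literally the translate $v + \operatorname{supp}(f)$; once this is in hand the argument is purely formal, and I do not anticipate a further obstacle.
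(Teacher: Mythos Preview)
Your proof is correct and follows essentially the same route as the paper: both arguments reduce the question to the support condition that $v+u\in(\Bbb Z_+)^s$ for every $u$ with $\alpha_u\neq0$, using that the Laurent monomials $w^u$ form a $\Bbb C$-basis in both localizations. The only cosmetic difference is that the paper first invokes $f\in Z$ to write $w^vf=fw^v$ before reading off the support, whereas you compute $w^vf$ directly via the reordering rule $w^vw^u=c_{v,u}w^{v+u}$ with $c_{v,u}\in\Bbb C^\times$; your observation that the nonvanishing of $c_{v,u}$ is the only delicate point is exactly right, and in fact shows the hypothesis $f\in Z$ is not needed for this particular lemma.
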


\begin{proof}
In $D$, $w^vf=fw^v$ since $f\in Z$. Hence $w^vf\in D$ if and only if $v+u\in(\Bbb Z_+)^s$ for all $u$ such that $\alpha_u\neq0$
if and only if $w^vf\in E$.
\end{proof}

We recall \cite[Theorem 2.4 and preceding comment]{Oh4}: A Poisson $\Bbb C$-algebra $B$
is said to satisfy the {\it Dixmier-Moeglin equivalence} if the
following conditions are equivalent: For a Poisson prime ideal $P$
of $B$,
\begin{itemize}
\item[(i)] $P$ is Poisson primitive.
\item[(ii)] $P$ is rational (i.e., the Poisson center of the quotient field of $B/P$ is equal to $\Bbb C$).
\item[(iii)] $P$ is locally closed (i.e., the intersection of all Poisson prime ideals properly containing $P$
is strictly larger than $P$).
\end{itemize}

\begin{thm}\label{PRIM}
The multi-parameter Poisson Weyl algebra $A_1$ satisfies the Dixmier-Moeglin equivalence.
\end{thm}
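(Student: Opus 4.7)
The plan is to verify the three conditions of the Poisson Dixmier--Moeglin equivalence by exploiting the stratification $\Pspec A_1 = \biguplus_T \Pspec_T A_1$ from Corollary~\ref{DIS}. Fix an admissible set $T$ and a Poisson prime $P\in\Pspec_T A_1$. By Lemma~\ref{TOR}(2), the localization $(A_1/\langle T\rangle)[\mathcal{Y}_T^{-1}]$ is Poisson isomorphic to the Poisson Laurent polynomial algebra $E = \Bbb C[w_1^{\pm 1},\ldots,w_s^{\pm 1}]$ with quadratic brackets $\{w_i,w_j\}=\widetilde{c}_{ij}'(1)w_iw_j$. Because the elements of $\mathcal{Y}_T$ are Poisson normal in $A_1/\langle T\rangle$ (from Lemma~\ref{BR}(2)), extension--contraction gives a homeomorphism between $\Pspec_T A_1$ and $\Pspec E$ that preserves Poisson primitivity, rationality and local closedness, so it suffices to verify the equivalence inside $\Pspec E$.

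I would next show that $P\mapsto P\cap Z_T$ is a homeomorphism from $\Pspec E$ onto $\spec Z_T$, where $Z_T$ denotes the Poisson center of $E$ computed in Lemma~\ref{CENT}(2). The key point is that every Poisson prime of $E$ is generated over $E$ by its intersection with $Z_T$: for $f=\sum_u \alpha_u w^u\in P$, one uses the identity $\{w_k, w^u\} = \bigl(u_1\widetilde{c}_{k1}'(1)+\cdots+u_s\widetilde{c}_{ks}'(1)\bigr)\,w_k w^u$ to peel off monomial components by repeatedly applying $\{w_k,-\}$, concluding from the torsion-free hypothesis on $G(Q,\Lambda)$ (which, via Lemma~\ref{CENT}(2), makes the weight forms separate the cosets in $\Bbb Z^s$ modulo the defining sublattice of $Z_T$) that each component $w^u$ of nonzero weight lies in $P$. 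Granted this homeomorphism, the Dixmier--Moeglin equivalence for $P$ reduces to the classical one on $\spec Z_T$: since $Z_T$ is a finitely generated commutative $\Bbb C$-algebra, Hilbert's Nullstellensatz guarantees that primitive, rational and locally closed coincide on $\spec Z_T$. Rationality transports because the Poisson center of the fraction field of $A_1/P$ equals the fraction field of $Z_T/(P\cap Z_T)$; local closedness transports because each stratum is locally closed in $\Pspec A_1$; and Poisson primitivity transports because Poisson cores of maximal ideals of $A_1$ lying in $\Pspec_T A_1$ restrict to maximal ideals of $Z_T$, again by the Nullstellensatz applied to the affine algebra $A_1=\Bbb C[y_1,x_1,\ldots,y_n,x_n]$.

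The principal obstacle is the bijection $\Pspec E \cong \spec Z_T$. Its quantum counterpart is essentially \cite[4.8--4.9]{AkJo}, but the Poisson version has to replace commutator-based weight separation by Poisson-bracket-based weight separation, and it has to verify that the linear forms $u\mapsto u_1\widetilde{c}_{k1}'(1)+\cdots+u_s\widetilde{c}_{ks}'(1)$ (as $k$ ranges) cut out exactly the defining lattice of $Z_T$. This is the point at which the $\Bbb Q$-linear independence of $\mu_1,\ldots,\mu_r$ built into Notation~\ref{ASSUM}, which is the functional role of the torsion-free hypothesis on $G(Q,\Lambda)$, is used in an essential way. All remaining verifications, in particular the behaviour of Dixmier--Moeglin properties under the Poisson-normal localization at $\mathcal{Y}_T$, are routine and parallel the quantum case, with Lemma~\ref{CONT} providing the bookkeeping between denominators in the quantum and the Poisson settings.
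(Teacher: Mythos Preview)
Your plan is sound and would yield a correct proof, but it takes a substantially more hands-on route than the paper. The paper's argument is very short: since $A_1$ is affine, the general implications ``locally closed $\Rightarrow$ Poisson primitive $\Rightarrow$ rational'' hold by \cite[Proposition~1.10]{Oh4}, so it suffices to prove ``rational $\Rightarrow$ locally closed''. Given a rational Poisson prime $P$ with $P\cap\mathcal{M}_n=T$, the paper passes to $(A_1/\langle T\rangle)[\mathcal{Y}_T^{-1}]$, which by Lemma~\ref{TOR}(2) is a Poisson Laurent polynomial ring, and then simply invokes \cite[Theorem~2.4]{Oh4} to conclude that the image of $P$ is locally closed there; finiteness of the set of admissible $T$ then gives local closedness of $P$ in $\Pspec A_1$.

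By contrast, you unpack what is essentially the content of \cite[Theorem~2.4 and Corollary~2.3]{Oh4} for this particular $E$: you build the bijection $\Pspec E\cong\spec Z_T$ directly via a Poisson weight-separation argument (correctly identifying the role of the $\Bbb Q$-linear independence of $\mu_1,\ldots,\mu_r$), and then transport all three Dixmier--Moeglin conditions through it, finishing with the Nullstellensatz on $Z_T$. This buys self-containment---no black-box appeal to \cite{Oh4}---at the cost of considerably more work. One caution: your transport of Poisson primitivity is the least justified step as written, since the witnessing maximal ideal $M$ for $P=\mathcal{P}(M)$ may meet $\mathcal{Y}_T$ and hence not survive localization; the cleanest fix is exactly the paper's shortcut, namely to use the free implications from \cite[Proposition~1.10]{Oh4} and only transport rationality and local closedness, both of which behave well under Poisson-normal localization.
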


\begin{proof}
Since $A_1$ is finitely generated, it is enough to show that every rational Poisson  prime ideal is locally closed by \cite[Proposition 1.10 and (ii)$\Rightarrow$(i) of the proof of Theorem 2.4]{Oh4}. Let $P$ be a rational Poisson prime ideal of $A_1$
and let $P\cap\mathcal{M}_n=T$. Then $T$ is an admissible set by Lemma~\ref{AP}.   Since the canonical image $P'$ of $P$ to $(A_1/\langle T\rangle)[\mathcal{Y}_T^{-1}]$ is rational, $P'$ is locally  closed by Lemma~\ref{TOR}(2) and \cite[Theorem 2.4]{Oh4}. It follows that $P$ is locally closed since admissible sets are finite.
\end{proof}

\begin{thm}\label{FIN}
Let $G(Q,\Lambda)$ be a torsion free group. Then there exists a homeomorphism  from $\spec R_n^{Q,\Lambda}$ onto $\Pspec A_1$ such that its restriction to $\prim R_n^{Q,\Lambda}$ is also a homeomorphism onto $\Pprim A_1$.
\end{thm}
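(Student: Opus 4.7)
I would mimic the two-step strategy of Example~\ref{affine}: first produce a homeomorphism from $\spec R_n^{Q,\Lambda}\cong\spec A_q$ (via Lemma~\ref{IS} and the inclusion $q\in\mathbf K$) onto an explicit subspace $X$ of ideals of $\widehat A$, then apply Corollary~\ref{INC} to transport $X$ homeomorphically onto $\Pspec A_1$. The underlying bijection $\Phi\colon\spec A_q\to\Pspec A_1$ is assembled stratum by stratum via Corollary~\ref{DIS}. Inside an admissible stratum $T$, Lemma~\ref{TOR} trivialises $(A_q/\langle T\rangle)[\mathcal Y_T^{-1}]$ as a quantum torus and $(A_1/\langle T\rangle)[\mathcal Y_T^{-1}]$ as its Poisson Laurent counterpart, and standard stratification arguments identify $\spec_T A_q\leftrightarrow\spec Z((A_q/\langle T\rangle)[\mathcal Y_T^{-1}])$ and $\Pspec_T A_1\leftrightarrow\spec ZP((A_1/\langle T\rangle)[\mathcal Y_T^{-1}])$. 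The key input is Lemma~\ref{CENT}, which gives an equality $Z=ZP$ of commutative $\Bbb C$-algebras; concatenating over $T$ produces $\Phi$.

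To realise $\Phi$ via $\Gamma$, for $P\in\spec_T A_q$ with contracted central prime $\mathfrak p\subseteq Z$ I would choose a generating set for $P$ consisting of the admissible elements of $T$ together with lifts in $A_q/\langle T\rangle$ of a generating set of $\mathfrak p$, where the latter is written as a set of polynomials in the fixed monomials $w^u$ appearing in Lemma~\ref{CENT}. Using Lemma~\ref{CONT}, these polynomials can be chosen to lie simultaneously in $D$ and $E$, so they lift canonically to elements of $A$; pushing through $\gamma$ produces $I_P\subseteq\gamma(A)$, and I set $\widehat P=\langle I_P\rangle\subseteq\widehat A$ and $X=\{\widehat P\mid P\in\spec A_q\}$. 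Because $\pi_q\circ\gamma=\gamma_q$ and $q\in\mathbf K$, the map $P\mapsto\widehat P$ is an inclusion-preserving bijection onto $X$, giving the first homeomorphism as in Example~\ref{affine}. On the Poisson side, $\Gamma(I_P)$ specialises the same generators at $t=1$ and, by construction of $\Phi$, gives a generating set for $\Phi(P)$. The subset condition $I_P\subseteq I_Q\Leftrightarrow\Gamma(I_P)\subseteq\Gamma(I_Q)$ required by Corollary~\ref{INC} then follows from the common monomial labelling of central generators across strata. Applying Corollary~\ref{INC} and composing with $P\mapsto\widehat P$ and $\psi^{-1}$ delivers the desired homeomorphism. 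The restriction to primitive spectra follows because $R_n^{Q,\Lambda}$ satisfies the Dixmier--Moeglin equivalence by \cite{GoLet1}, $A_1$ satisfies its Poisson analogue by Theorem~\ref{PRIM}, and $\Phi$ preserves rationality since the centres that parametrise the strata coincide.

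The main obstacle is the coherent choice of lifts $I_P$ across all of $\spec A_q$ so that the subset-inclusion condition of Corollary~\ref{INC} holds, not merely a pointwise bijection. The admissible parts $T$ sit naturally in $A$, so their contribution is harmless, but the central parts live a priori in a Laurent polynomial ring and may involve negative exponents of the $w^u$. Lemma~\ref{CONT} is designed precisely for this: clearing denominators by $w^v$ with $v\in(\Bbb Z_+)^s$ simultaneously in $D$ and $E$ lands the generators in $A_q/\langle T\rangle$ and $A_1/\langle T\rangle$ without enlarging their ideal closures in $\widehat A$, because the $w_i$ are already units after the relevant localisation. Once this compatibility is secured, all remaining verifications are routine applications of Lemmas \ref{TOR}, \ref{CENT}, \ref{CONT}, Corollary~\ref{INC}, and the (Poisson) Dixmier--Moeglin dichotomy.
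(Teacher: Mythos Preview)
Your proposal is correct and follows essentially the same route as the paper's proof: stratify by admissible sets, use Lemmas~\ref{TOR}--\ref{CONT} to identify $\spec_T A_q$ and $\Pspec_T A_1$ with $\spec Z$ for the common centre $Z=ZP$, lift $q$-independent generators through $\gamma$ to build $X\subseteq\widehat A$, apply Corollary~\ref{INC}, and finish with the two Dixmier--Moeglin equivalences. The only refinement in the paper is that it writes the lifted generating set $J_{P'}$ completely explicitly (equation~(\ref{WI})), so that $\Gamma$ visibly acts as the identity on $\widehat{J_{P'}\cup T}$ and the inclusion hypothesis of Corollary~\ref{INC} becomes automatic rather than needing your ``common monomial labelling'' justification.
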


\begin{proof}
It is enough to show the result for $A_q$ instead of $R_n^{Q,\Lambda}$ by Lemma~\ref{IS}. Let $T$ be an admissible set and set $Y_T=\{w_1,\ldots,w_s\}$. Denote by $\iota$ the canonical epimorphism from $A_q$ onto $A_q/\langle T\rangle$ or
the canonical epimorphism from $A_1$ onto $A_1/\langle T\rangle$. For any $u=(u_1,\ldots,u_s)\in\Bbb Z^s$, we set $w^u=w_1^{u_1}\cdots w_s^{u_s}$   and   $\iota(w^u)=\iota(w_1)^{u_1}\cdots \iota(w_s)^{u_s}$.
We may set $$Z=Z((A_q/\langle T\rangle)[\mathcal{Y}_T^{-1}])=ZP((A_q/\langle T\rangle)[\mathcal{Y}_T^{-1}])$$
by Lemma~\ref{CENT}.
For any prime ideal $P'$ of $Z$, set
\begin{equation}\label{WI}
J_{P'}=\left\{w^v(\sum_{u\in\Bbb Z^s,\alpha_u\neq0}\alpha_uw^u)|\sum\alpha_u\iota(w^u)\in P',v\in(\Bbb Z_+)^s \text{ such that }v+u\in(\Bbb Z_+)^s\ \forall u \right\}.
\end{equation}
Note that $\iota(J_{P'})$ is the contraction of $P'$ to  $A_q/\langle T\rangle$ and $A_1/\langle T\rangle$ by Lemma~\ref{CONT}
and that the ideal generated by $J_{P'}\cup T$ is a prime ideal of $A_q$ and a Poisson prime ideal of $A_1$ by \cite[1.5]{GoLet3} and \cite[Corollary 2.3]{Oh4}.

Let  $P\in \spec_T A_q$.  Then
 there exists a unique prime ideal $P'$ of $Z$ such that $\iota(P)$ is generated by the contraction of $P'$ to  $A_q/\langle T\rangle$  \cite[1.5]{GoLet3} and thus $P$ is generated by $J_{P'}\cup T$. Similarly, if $P\in \spec_T A_1$ then
 there exists a unique prime ideal $P'$ of $Z$ such that $\iota(P)$ is generated by the contraction of $P'$ to  $A_1/\langle T\rangle$  \cite[Corollary 2.3]{Oh4} and thus $P$ is generated by $J_{P'}\cup T$.

Let $\ \widehat{}\ :A_q\longrightarrow \widehat{A}$ be the map defined by $f(q)\mapsto\widehat{f(q)}=(f(\lambda))_{\lambda\in{\bf K}}$.
Since  all $\alpha_u$ in (\ref{WI}) are independent to $q$ by Lemma~\ref{CENT}(1), $\pi_\lambda(\widehat{x})=x$ and $\Gamma(\widehat{x})=x$ for $x\in J_{P'}\cup T$.
Denote by $X$ the  ideals of $\widehat{A}$ generated by $\widehat{J_{P'}\cup T}$ for all $P\in\spec_T A_q$ and admissible set $T$. Then the map $\ \widehat{}\ :A_q\longrightarrow \widehat{A}$ induces a homeomorphism
$\widehat{\varphi}$ from $\spec A_q$ onto $X$ and
$$\Gamma(\widehat{J_{P'}\cup T})=J_{P'}\cup T.$$
Hence the map $\varphi_\Gamma$ given in (\ref{GAMM}) is a homeomorphism from $X$ onto $\Pspec A_1$ by Corollary~\ref{INC}. It follows that $\spec A_q$ is homeomorphic to $\Pspec A_1$ by the composition $\varphi_\Gamma\circ\widehat{\varphi}$.
The restriction of this homeomorphism to the primitive spectrum $\prim A_q$ is also a homeomorphism onto the Poisson primitive spectrum $\Pprim A_1$ by \cite[9.4]{Good4}, \cite[5.6]{GoLet1} and Theorem~\ref{PRIM}.
\end{proof}

Let $\mathcal{O}(V)$ be the coordinate ring of a Poisson affine variety $V$. The sets $\{p\in V|\mathcal{P}(\frak m_p)=P\}$ for $P\in\Pprim\mathcal{O}(V)$ are called the {\it symplectic cores} in $V$, where $\frak m_p=\{f\in\mathcal{O}(V)|f(p)=0\}$. Refer to \cite[\S6]{Good4} for symplectic cores.

\begin{cor}
Let $G(Q,\Lambda)$ be a torsion free group. Then  $\prim R_n^{Q,\Lambda}$ is homeomorphic to the space of symplectic cores in the affine variety $\Bbb C^n$, where the Poisson bracket in $\Bbb C^n$ is given by $A_1$.
\end{cor}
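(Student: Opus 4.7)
The strategy is to reduce the statement to what has already been proved: Theorem~\ref{FIN} supplies a homeomorphism $\prim R_n^{Q,\Lambda}\cong \Pprim A_1$, so it suffices to identify $\Pprim A_1$ with the space of symplectic cores in the underlying affine variety $V$ of $A_1$ (the polynomial algebra $\Bbb C[y_1,x_1,\ldots,y_n,x_n]$, with the Poisson bracket of Theorem~\ref{ZZX}(3)).

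The key input is Theorem~\ref{PRIM}: $A_1$ is a finitely generated complex Poisson algebra satisfying the Poisson Dixmier-Moeglin equivalence. In particular every Poisson primitive ideal $P$ of $A_1$ is of the form $\mathcal{P}(\mathfrak{m}_p)$ for some point $p\in V$, so the map $p\mapsto \mathcal{P}(\mathfrak{m}_p)$ from $V$ to $\Pprim A_1$ is surjective. By definition of symplectic cores, the fibers of this map are exactly the symplectic cores in $V$, so the induced set-theoretic map from the space of symplectic cores to $\Pprim A_1$ is a bijection.

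To promote this bijection to a homeomorphism, I would cite the general framework of \cite[\S6]{Good4} (in particular the compatibility of the quotient topology on the space of symplectic cores with the Zariski topology on $\Pprim \mathcal{O}(V)$, which holds once the Dixmier-Moeglin equivalence is in force). Composing this homeomorphism with the one from Theorem~\ref{FIN} yields the desired homeomorphism between $\prim R_n^{Q,\Lambda}$ and the space of symplectic cores. The only step requiring any real thought is the appeal to \cite[\S6]{Good4}; everything else is a direct combination of the earlier results of the paper.
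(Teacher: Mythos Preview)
Your proposal is correct and follows essentially the same route as the paper: the paper's proof is the one-line citation ``It follows immediately by Theorem~\ref{PRIM}, Theorem~\ref{FIN} and \cite[Lemma 9.3]{Good4},'' which is exactly your argument (Theorem~\ref{FIN} gives $\prim R_n^{Q,\Lambda}\cong\Pprim A_1$, and Theorem~\ref{PRIM} together with Goodearl's result identifies $\Pprim A_1$ with the space of symplectic cores). The only refinement is that the precise reference you want from \cite{Good4} is Lemma~9.3 rather than \S6 in general.
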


\begin{proof}
It follows immediately by Theorem~\ref{PRIM}, Theorem~\ref{FIN} and \cite[Lemma 9.3]{Good4}.
\end{proof}


\bibliographystyle{amsplain}



\providecommand{\bysame}{\leavevmode\hbox to3em{\hrulefill}\thinspace}
\providecommand{\MR}{\relax\ifhmode\unskip\space\fi MR }
\providecommand{\MRhref}[2]{%
  \href{http://www.ams.org/mathscinet-getitem?mr=#1}{#2}
}
\providecommand{\href}[2]{#2}

\end{document}